\newcommand{\Hp}{H^\prime}
\newcommand{\Hpp}{H^{\prime\prime}}
\newcommand\tchi{\tilde{\chi}}
\newcommand\w{\mathbf{\omega}}
\newcommand\wb{\omega}
\newcommand\FF{\mathcal{P}}
\newcommand\R{\mathbb{R}}
\newcommand\conv{\mathsf{conv}\,}
\renewcommand\min{\mathsf{min}\,}
\newcommand\tprism{\mathsf{tprism}\,}
\newcommand\bip{\mathsf{bip}\,}
\newcommand\prism{\mathsf{prism}\,}
\newcommand\pyr{\mathsf{pyr}\,}
\newcommand\A{{\bf A}}
\newcommand\B{{\bf B}}
\newcommand\C{{\bf C}}
\newcommand\x{\mathbf{x}}
\newcommand\p{\mathbf{p}}
\newcommand\ZO{\{0,1\}}
\newcommand\HS{\tilde{\Delta}}
\renewcommand\dim{\mathsf{dim}\,}
\newcommand\rk{\mathsf{rank}\,}
\newcommand\1{\mathbbmss{1}}
\newcommand\PM{\{+1,-1\}}
\newcommand\Z{\mathbbmss{Z}}
\renewcommand\a{\alpha}
\newcommand\dual{\triangle}
\newcommand\tor{\mathsf{tor}}
\newcommand\ic[1]{\mathsf{Ind}(#1)}
\newcommand\sym{\mathsf{sym}}
\newcommand\Sphi{\Phi^\sym}
\newcommand\bV{\overline{V}}
\newcommand\bE{\overline{E}}
\newtheorem{thm}{Theorem}[section]
\newtheorem{prop}[thm]{Proposition}
\newtheorem{lem}[thm]{Lemma}
\newtheorem{cor}[thm]{Corollary}
\theoremstyle{definition}
\newtheorem{dfn}[thm]{Definition}
\newtheorem{example}[thm]{Example}
\theoremstyle{remark}
\begin{document}
\title{On Kalai's conjectures\\concerning centrally symmetric polytopes} 
\author{\Large Raman Sanyal \qquad Axel Werner \qquad 
         G\"{u}nter M.  Ziegler\\[3mm]
    Institute of Mathematics, MA 6-2\\
    TU Berlin\\
    D-10623 Berlin, Germany\\
    \tt\{sanyal,awerner,ziegler\}@math.tu-berlin.de
} 

\date{September 30, 2007}

\maketitle

\begin{abstract}\noindent
  In 1989 Kalai stated the three conjectures \A, \B, \C\ of increasing
  strength concerning face numbers of centrally symmetric convex polytopes.
  The weakest conjecture, \A, became known as the ``$3^d$-conjecture''.  It is
  well-known that the three conjectures hold in dimensions $d \le 3$. We show
  that in dimension $4$ only conjectures \A\ and \B\ are valid, while
  conjecture \C\ fails. Furthermore, we show that both conjectures \B\ and \C\
  fail in all dimensions $d \ge 5$.
\end{abstract}

\section{Introduction}\label{sec:intro}

A convex $d$-polytope $P$ is \emph{centrally symmetric}, or \emph{cs} for
short, if $P = -P$. Concerning face numbers, this implies that for $0 \le i
\le  d-1$ the number of $i$-faces $f_i(P)$ is even and, since $P$ is
full-dimensional, that $\min \{f_0(P),f_{d-1}(P)\} \ge 2d$.  Beyond this, only
very little is known for the general case. That is to say, the extra
(structural) information of a central symmetry yields no substantial
additional constraints for the face numbers on the restricted class of
polytopes.

Not uncommon to the $f$-vector business, the knowledge about face numbers is
concentrated on the class of centrally symmetric \emph{simplicial}, or dually
\emph{simple}, polytopes. In 1982, B\'{a}r\'{a}ny and Lov\'{a}sz \cite{bar82}
proved a lower bound on the number of vertices of simple cs polytopes with
prescribed number of facets, using a generalization of the Borsuk--Ulam
theorem. Moreover, they conjectured lower bounds for all face numbers of this
class of polytopes with respect to the number of facets. In 1987 Stanley
\cite{stan87} proved a  conjecture of Bj\"{o}rner concerning the $h$-vectors
of simplicial cs polytopes that implies the one by B\'{a}r\'{a}ny and
Lov\'{a}sz. The proof uses Stanley-Reisner rings and toric varieties plus a
pinch of representation theory. The result of Stanley \cite{stan87} for cs
polytopes was reproved in a more geometric setting by Novik \cite{nov99} 
by using ``symmetric flips'' in McMullen's weight algebra \cite{mcm96}. 
For general polytopes, lower bounds on the \emph{toric} $h$-vector were
recently obtained by A'Campo-Neuen \cite{aca06} by using combinatorial
intersection cohomology.  Unfortunately, the toric $h$-vector contains only
limited information about the face numbers of general (cs) polytopes and thus
the applicability of the result is limited (see Section~\ref{sec:rigid}).

In \cite{kal89}, Kalai stated three conjectures about the face numbers of
general cs polytopes. Let $P$ be a (cs) $d$-polytope with $f$-vector $f(P) =
(f_0,f_1,\dots,f_{d-1})$. Define the function $s(P)$ by
\[
    s(P) := 1 + \sum_{i=0}^{d-1}f_i(P) = f_P(1)
\]
where $f_P(t) := f_{d-1}(P) + f_{d-2}(P)t + \cdots + f_0(P)t^{d-1}+ t^d$ 
is the $f$-polynomial.
Thus, $s(P)$ measures the total number of non-empty faces of $P$. Here is
Kalai's first conjecture from \cite{kal89}, the ``$3^d$-conjecture''.

{\bf Conjecture A.} Every centrally-symmetric $d$-polytope has at least $3^d$
non-empty faces, i.e.\ $s(P) \ge 3^d$.

Is is easy to see that the bound is attained for the $d$-dimensional cube
$C_d$ and for its dual, the $d$-dimensional crosspolytope $C_d^\dual$. It
takes a moment's thought to see that in dimensions $d \ge 4$ these are not the
only polytopes with $3^d$ non-empty faces. An important class that attains the
bound is the class of \emph{Hanner polytopes} \cite{han56}.  These are defined
recursively: As a start, every cs $1$-dimensional polytope is a Hanner
polytope. For dimensions $d \ge 2$, a $d$-polytope $H$ is a Hanner polytope if
it is the direct sum or the direct product of two (lower dimensional) Hanner
polytopes $\Hp$ and $\Hpp$.

The number of Hanner polytopes grows exponentially in the dimension~$d$, with
a Catalan-type recursion. It is given by the number of two-terminal networks
with $d$ edges, $n(d)=1, 1, 2, 4, 8, 18, 40, 94, 224, 548, 1356,\dots$, for
$d=1,2,\dots$, as counted by Moon \cite{Moon87}; see also
\cite{sloane:_number}.%

{\bf Conjecture B.} For every centrally-symmetric $d$-polytope $P$ there is a
$d$-dimensional Hanner polytope $H$ such that $f_i(P) \ge f_i(H)$ for all $i =
0, \dots, d-1$.

For a $d$-polytope $P$ and $S = \{i_1, i_2, \dots, i_k \} \subseteq [d] = \{0,
1, \dots, d-1\}$ let $f_S(P) \in \Z^{2^{[d]}}$ be the number of chains of
faces $F_1 \subset F_2 \subset \cdots \subset F_k \subset P$ with $\dim F_j =
i_j$ for all $j = 1,\dots,k$. Identifying $\R^{2^{[d]}}$ with its dual space
via the standard inner product, we write $\a(P) := \sum_S \a_S f_S(P)$ for
$(\a_S)_{S\subseteq[d]} \in \R^{2^{[d]}}$. The set
\[
    \FF_d = \bigl\{ (\a_S)_{S\subseteq[d]} \in \R^{2^{[d]}} :  
        \a(P) = \sum_S \a_S f_S(P) \ge 0 \text{ for all $d$-polytopes } P
    \bigr\}
\]
is the polar to the set of flag-vectors of $d$-polytopes, that is, the cone of
all linear functionals that are non-negative on all flag-vectors of (not
necessarily cs) $d$-polytopes.

{\bf Conjecture C.} For every centrally-symmetric $d$-polytope $P$ there is a
$d$-dimensional Hanner polytope $H$ such that $\a(P) \ge \a(H)$ for all $\a
\in \FF_d$.

It is easy to see that \C\ $\Rightarrow$ \B\ $\Rightarrow$ \A: Define $\a^i(P)
:= f_i(P)$, then $\a^i \in \FF_d$ and the validity of \C\ on the functionals
$\a^i$ implies \B; the remaining implication follows since $s(P)$ is a
non-negative combination of the $f_i(P)$.

In this paper we investigate the validity of these three conjectures in
various dimensions. Our main results are as follows.

\begin{thm}\label{thm:dim4}
    The conjectures \A\ and \B\ hold for centrally symmetric polytopes of
    dimension $d \le 4$.
\end{thm}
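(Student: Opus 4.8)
The plan is to reduce everything to dimension $4$. For $d\le 3$ the conjectures are elementary. Since a proper face of a centrally symmetric polytope meets each antipodal pair of vertices in at most one point, a cs $3$-polytope with only $6$ vertices is simplicial on $6$ vertices, hence combinatorially the octahedron, and dually a cs $3$-polytope with only $6$ facets is the cube; in every other case $f_0,f_2\ge 8$, and then Euler's relation gives $s(P)=1+f_0+f_1+f_2=2(f_0+f_2)-1\ge 31>3^3$, while the $f$-vector of the cube or of the octahedron is dominated coordinatewise. The cases $d\le 2$ are trivial. Since $\B\Rightarrow\A$, it is enough to prove $\B$ in dimension $4$.

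Let $P$ be a cs $4$-polytope. The four $4$-dimensional Hanner polytopes are the cube $C_4$, the cross-polytope $C_4^\dual$, the prism over the octahedron $C_3^\dual\times I$, and the bipyramid over the cube $C_3\oplus I$, with $f$-vectors $(16,32,24,8)$, $(8,24,32,16)$, $(12,30,28,10)$ and $(10,28,30,12)$; they fall into two polar pairs. Both the class of cs polytopes and the statement of $\B$ are invariant under polarity, so after possibly replacing $P$ by $P^\dual$ I may assume $f_0\le f_3$; both numbers are even and at least $8$. The argument then proceeds by cases on $f_0$. The base case is $f_0=8$: a cs $d$-polytope with exactly $2d$ vertices is combinatorially the cross-polytope (each facet is a simplex using exactly one vertex of every antipodal pair, and crossing a ridge to the adjacent facet replaces one vertex by its antipode, which forces the face lattice of $C_d^\dual$), so $P=C_4^\dual$ dominates itself.

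For $f_0\ge 10$ the task is to push $f_1$ and $f_2$ above the thresholds of $C_4^\dual$ or $C_3\oplus I$ (or, when $f_0\ge 12$, of $C_3^\dual\times I$ or $C_4$), always keeping Euler's relation $f_2=f_1-f_0+f_3$ in play. Here I would invoke A'Campo-Neuen's theorem that a cs $d$-polytope $Q$ satisfies $h^{\tor}_i(Q)\ge\binom di$, the centrally symmetric analogue (for not necessarily simplicial $Q$) of Stanley's lower bound. For a $4$-polytope one computes $h^{\tor}_2=f_{03}-2f_0-3f_3+6$, where $f_{03}$ is the number of incident vertex--facet pairs; applying $h^{\tor}_2\ge 6$ to $P$ and to $P^\dual$ (and using that $f_{03}$ is self-polar) gives $f_{03}\ge 2f_0+3f_3$ and $f_{03}\ge 3f_0+2f_3$. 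Combined with the elementary upper bound $f_{03}\le 4(f_1-f_0)$ (valid because each facet is a $3$-polytope, hence has at most $\tfrac23$ as many vertices as edges), this already yields $f_1\ge\tfrac14(6f_0+3f_3)$, and then $f_2=f_1-f_0+f_3$ is controlled as well. Testing these bounds against the four Hanner $f$-vectors disposes of every pair $(f_0,f_3)$ except a short list lying just below a Hanner value, which I expect to be $(10,10)$, $(10,12)$, $(10,14)$ and $(12,12)$.

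The main obstacle is exactly this borderline list; for it I would argue directly, classifying the cs $4$-polytopes with the small vertex and facet numbers in question. The key structural facts are that every facet is a $3$-polytope on at most $f_0/2$ vertices and every vertex lies in at most $f_3/2$ facets, so that for $f_0=10$ the facets are simplices, square pyramids or triangular bipyramids, and the bounds $2f_0+3f_3\le f_{03}\le\tfrac12 f_0 f_3$ pin $f_{03}$ near its maximum; a careful count of the edges contributed by the facets, together with an analysis of how they can be glued along ridges, should rule out $(10,10)$ entirely and force $f_1$ up to $28$ (respectively up to $30$ when $f_0=f_3=12$) in the remaining cases, so that $C_3\oplus I$ (respectively $C_3^\dual\times I$) is dominated. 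In this last step one might also appeal to an explicit enumeration of the cs $4$-polytopes with at most $12$ vertices. This completes $\B$ for $d=4$, and hence $\A$ for all $d\le 4$.
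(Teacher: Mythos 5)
Your overall architecture is close to the paper's: reduce to $d=4$, lower-bound $f_{03}$ via a toric $h$-vector inequality for cs polytopes, upper-bound $f_{03}$ via Bayer's inequality $f_{03}\le 4(f_1-f_0)=4(f_2-f_3)$, and then dispose of a short list of borderline $(f_0,f_3)$ pairs. Two points.

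First, a suboptimality that enlarges your borderline region: you invoke only the coefficientwise part of A'Campo-Neuen's theorem, $h_2^\tor\ge\binom d2$, giving $f_{03}\ge 2f_0+3f_3$. The same theorem (unimodality of $h^\tor_P(t)-(1+t)^d$) yields $g_2^\tor\ge\binom d2-d=2$, which after the same flag-vector manipulations gives $f_{03}\ge 3f_0+3f_3-8$ --- strictly stronger whenever $f_0>8$, and the inequality the paper actually uses (Corollary~\ref{cor:g2_symm}). With your weaker bound one gets $f_1\ge\tfrac34(2f_0+f_3)$ instead of $f_1\ge\tfrac14(7f_0+3f_3)-2$, and e.g.\ for $(f_0,f_3)=(10,16)$ this just barely squeaks through, while $(12,12)$ only gives $f_1\ge 28$ and hence $f_2\ge 28$, which does not dominate any $4$-dimensional Hanner polytope.

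Second, and this is the genuine gap: your treatment of the borderline cases is not a proof. ``A careful count\dots should rule out $(10,10)$'' and ``one might also appeal to an explicit enumeration'' are placeholders, not arguments. The missing idea is the paper's \emph{twisted prism} observation for $f_0=10$: since $P$ is not simplicial it has a facet $F$ with $\ge 5$ vertices, and since no face of a cs polytope contains an antipodal pair, $F$ and $-F$ are disjoint and exhaust all $10$ vertices, so $P=\conv(F\cup -F)=\tprism F$ with $F$ a $3$-polytope on $5$ vertices. There are only two combinatorial types for $F$. If $F$ is a pyramid over a quadrilateral, Proposition~\ref{prop:tprism}(2) shows $P\cong\bip P'$ for a cs $3$-polytope $P'$, and conjecture~\B\ drops to dimension~$3$ via $f_{\bip P'}(t)=(2+t)f_{P'}(t)$. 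If $F$ is a bipyramid over a triangle, the lemma in Section~\ref{sec:twist} shows $f_3(P)\ge 2(2d-1)=14$, so $P$ is not small. This disposes of $(10,10)$ and $(10,12)$ cleanly and without any enumeration. Your remaining pair $(12,12)$ (together with $(10,14)$ after improving the inequality) is handled in the paper by noting that $f_{03}=64$ forces $P$ to be center-boolean with all facets simple on $\le 6$ vertices, and then a short combinatorial argument about prism and simplex facets pushes $f_1$ up; this is in the spirit of what you gesture at but needs to be carried out to count as a proof.

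So: right skeleton, right tools, but the small-polytope case analysis --- the part that actually requires a new idea --- is absent, and the structural fact you need there (twisted prism $\Rightarrow$ bipyramid over a cs $3$-polytope) is exactly what you do not have.
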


\begin{thm}\label{thm:Cdim4}
    Conjecture \C\ is false in dimension $d = 4$.
\end{thm}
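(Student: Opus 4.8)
The plan is to refute Conjecture \C\ by exhibiting a single centrally symmetric $4$-polytope $P$ together with a functional $\a \in \FF_4$ such that $\a(P) < \a(H)$ for every $4$-dimensional Hanner polytope $H$. Since Theorem \ref{thm:dim4} tells us that \B\ does hold in dimension $4$, the counterexample $P$ must necessarily beat every Hanner polytope on ordinary face numbers $f_0,\dots,f_3$ but lose to one of them on some finer flag-vector entry; hence the separating functional $\a$ cannot be one of the $\a^i$, and must genuinely involve the flag numbers $f_S$ for $|S|\ge 2$ (for instance $f_{02}$, the vertex--ridge incidences). So the first step is to select a good candidate polytope: the natural choice is a cs $4$-polytope that is ``close to'' a Hanner polytope in face numbers but more ``generic'' in its flag structure — e.g. a suitable cs polytope with $f$-vector close to that of $C_4$ or $C_2\oplus C_2$ (the $24$-cell is a tempting candidate, or a small perturbation/construction built from prisms over $3$-polytopes), and then to compute its full flag vector $(f_S(P))_{S\subseteq[4]}$ explicitly.

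The second step is to assemble the finite list of all $4$-dimensional Hanner polytopes and their flag vectors. By the recursion in the introduction there are $n(4)=4$ combinatorial types of Hanner $1$-through-$4$-polytopes, and in dimension $4$ the Hanner polytopes are $C_4$, $C_4^\dual$, $C_3\times[-1,1]$ (prism over the $3$-cube, equivalently the $3$-crosspolytope times a segment by duality considerations), and $C_2\oplus C_2 = C_2^\dual\times C_2^\dual$-type; I would compute each of their flag vectors from the product/direct-sum formulas for flag $f$-vectors (flag vectors behave predictably under $\oplus$ and $\times$). This yields a small explicit finite set $\{\,\mathbf{f}(H_j)\,\}_{j}$ of points in $\R^{2^{[4]}}$.

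The third step is the separation itself: I need a vector $\a\in\FF_4$ with $\a(P)<\min_j \a(H_j)$. To certify that the chosen $\a$ lies in $\FF_4$ I will use known facts about the flag-vector cone of $4$-polytopes — concretely the generalized Dehn--Sommerville relations together with the known facet-defining inequalities of $\FF_4$ (Bayer's description of the flag-vector cone of $4$-polytopes), so that $\a$ can be taken to be a nonnegative combination of those valid inequalities. Then checking $\a(P)<\a(H_j)$ for each of the four $j$ is a finite, routine computation. Packaging: exhibit $P$, exhibit $\a$ as an explicit nonnegative combination of Bayer's inequalities (hence $\a\in\FF_4$), tabulate $\a(P)$ and the four values $\a(H_j)$, and observe $\a(P)$ is strictly smallest.

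The main obstacle is the \emph{search}: there is no a priori reason a convenient $(P,\a)$ pair exists, and finding one requires understanding in which direction a cs $4$-polytope can be ``flag-vector extremal'' relative to the Hanner polytopes. I expect the resolution to come from the structure of $\FF_4$ — identifying one of Bayer's facet inequalities (likely one controlling the number of $2$-faces relative to vertices and facets, where Hanner polytopes, being built from cubes and crosspolytopes, are highly non-generic) on which some explicit cs construction strictly undercuts all four Hanner polytopes simultaneously. Once the right inequality is pinpointed, verifying the counterexample is elementary; the creative content is entirely in locating $P$ and the correct supporting functional $\a$.
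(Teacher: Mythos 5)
Your high-level strategy is the paper's strategy, and some of your guesses are on target: the separating functional should measure how non-generic a polytope is in its vertex--$2$-face incidences $f_{02}$, precisely where cubes and crosspolytopes are extremal, and the $24$-cell you mention would in fact work (it is centrally symmetric, $2$-simple and $2$-simplicial, so it takes the value $0$ on the functional below). But the concrete pair $(P,\a)$, which you yourself flag as ``the main obstacle,'' is exactly what is missing, and it \emph{is} the content of the theorem; a plan to ``search'' is not a proof. Moreover, the paper does not go through Bayer's description of $\FF_4$ as you propose. It takes $\a = f_{02} - \tfrac{3}{2}(f_1+f_2)$, the average of $\ell_1 = f_{02}-3f_2 = \sum_{k\ge4}(k-3)F_k \ge 0$ (vanishing exactly on $2$-simplicial polytopes, where $F_k$ counts $2$-faces with $k$ vertices) and $\ell_2 = f_{13}-3f_1 = f_{02}-3f_1 = \ell_1(P^\dual) \ge 0$ (vanishing on $2$-simple polytopes, rewritten via a generalized Dehn--Sommerville identity in dimension $4$). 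This certifies $\a \in \FF_4$ with two lines, no cone description needed. The polytope is $P_4 = [-1,1]^4 \cap \{-2 \le x_1+\cdots+x_4 \le 2\}$, the $4$-cube with two antipodal vertices sliced off through their neighbors; it has $f$-vector $(10,32,36,14)$ with $f_{02} = 108$, so $\a(P_4) = 6$, while all four Hanner $4$-polytopes have $f_{02} = 96$ and $\a \ge 9$.

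A factual slip worth correcting: your list of the four combinatorial types of Hanner $4$-polytopes collapses to two. They are $C_4$, $C_4^\dual$, $\bip C_3 = I \oplus C_3$, and $\prism C_3^\dual = I \times C_3^\dual$; in your list $C_3 \times I$ is $C_4$ itself, and $C_2 \oplus C_2$ is combinatorially $C_4^\dual$ (take duals: $(C_2\oplus C_2)^\dual = C_2 \times C_2 = C_4$). This matters because the verification step needs all four flag vectors, and your list would have you check only two of them.
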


\begin{thm}\label{thm:dim5}
    For all $d \ge 5$
    both conjectures \B\ and \C\ fail.
\end{thm}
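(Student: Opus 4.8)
The plan is to refute \B\ directly: since \C\ implies \B, it suffices to produce, for every $d\ge 5$, a single centrally symmetric $d$-polytope $P_d$ whose $f$-vector is not coordinatewise $\ge f(H)$ for \emph{any} $d$-dimensional Hanner polytope $H$ --- equivalently, such that for each such $H$ there is an index $i$ with $f_i(P_d)<f_i(H)$. Doing this for all $d\ge 5$ makes \B, hence also \C, fail there. Observe first that $P_d$ cannot be obtained by a functorial operation (prism, bipyramid, product, direct sum) from a centrally symmetric polytope of smaller dimension for which \B\ already holds: each of these operations acts on $f$-vectors monotonically and sends Hanner polytopes to Hanner polytopes, so it would preserve dominance by a Hanner polytope. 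The construction must therefore leave the centrally symmetric world for a moment.

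I would start with $d=5$ and exhibit an explicit centrally symmetric $5$-polytope $P_5$ as a \emph{twisted prism} $\tprism Q=\conv\bigl((Q\times\{-1\})\cup(-Q\times\{+1\})\bigr)$ over a suitable non-centrally-symmetric $4$-polytope $Q$ (an iterated bipyramid/prism over such a $Q$ would serve equally well); the twist is what guarantees that $P_5$ is not a straight prism or direct sum, so it is not forced to be Hanner. Choosing $Q$ so that $P_5$ is as lean as possible in one or two face numbers, I would compute $f(P_5)$ from the standard face-counting rules for $\tprism$, $\bip$ and $\prism$. On the other side, the eight Hanner $5$-polytopes --- four dual pairs, built recursively by $\oplus$ and $\times$ from the segment --- have explicitly computable $f$-vectors; I would then check that $f(P_5)$ dominates none of them, recording for each a coordinate in which $P_5$ is strictly smaller.

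For $d\ge 6$ I would propagate the example by iterating bipyramids, $P_d:=\bip P_{d-1}$, so that $f(P_d)$ is determined from $f(P_{d-1})$ through $f_i(\bip P)=f_i(P)+2f_{i-1}(P)$ (with the convention $f_{-1}:=1$). The structural ingredient needed is a description of the $f$-vectors of all Hanner $d$-polytopes in terms of $f$-vectors of Hanner polytopes of dimension $<d$, coming from the defining recursion $H=H'\circ H''$ with $\circ\in\{\oplus,\times\}$ together with the product and direct-sum formulas for $f$-vectors. Feeding this into the formula for $f(P_d)$, I would run an induction on $d$ showing that no Hanner $d$-polytope dominates $P_d$: a Hanner polytope of the form $\bip H'$ is again a bipyramid and is handled by the inductive hypothesis (since $\bip$ is monotone on $f$-vectors), while the Hanner polytopes that arise as genuine products or direct sums of two lower-dimensional factors of dimension $\ge 2$ would be treated by direct face-number estimates against $f(P_d)$.

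The step I expect to be the real obstacle is exactly this induction. The number $n(d)$ of Hanner $d$-polytopes grows exponentially, so one cannot check them one at a time, and the $\oplus/\times$ recursion mixes the coordinates of the $f$-vector, so dominance of $P_d$ by a Hanner polytope does not transparently reduce to dominance of $P_{d-1}$ by a smaller one. The crux is therefore to isolate the right invariant --- most plausibly a single, dimension-dependent face number $f_{i(d)}$ (or a fixed functional $\a\in\FF_d$) that is provably large on \emph{every} Hanner $d$-polytope while being small on $P_d$ --- and to establish the corresponding extremal lower bound over all Hanner polytopes simultaneously, exploiting the super- or submultiplicativity of that functional under $\oplus$ and $\times$ together with an induction on the dimension. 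Once such a uniform lower bound is pinned down, the rest reduces to the bookkeeping sketched in the previous two paragraphs.
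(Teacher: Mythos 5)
The plan correctly identifies the high-level strategy (refute \B, hence \C; find one cs polytope $P_d$ per dimension; use a face number that is small on $P_d$ but forced to be large on any dominating Hanner polytope), and you are right that iterating functorial operations on lower-dimensional polytopes satisfying \B\ cannot produce counterexamples, so something genuinely new is needed. But as written the proposal has a gap precisely where you yourself flag it: the concrete construction and the ``right invariant'' are never supplied, and these are the entire content of the paper's proof. The paper's choice of $P_d$ is the central hypersimplex $\HS_k=\Delta(k,2k)$ for odd $d=2k-1$, which (as you correctly anticipate) \emph{is} a twisted prism over a non-cs polytope, namely $\Delta(k,2k-1)$; the crucial feature is that it has only $f_{d-1}=2d+2$ facets and $f_0=\tbinom{2k}{k}$ vertices. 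The ``right invariant'' is then not a single face number but a two-coordinate argument powered by a structural fact about Hanner polytopes (Proposition~\ref{prop:Hanner2d}): any Hanner $d$-polytope with $f_{d-1}\le 2d+2$ is either $C_d$ or $C_{d-3}\times C_3^\dual$, and both of these have $f_0\ge 3\cdot2^{d-2}>\tbinom{2k}{k}$ for $k\ge3$. This classification is proved by a short induction on the $\oplus/\times$ recursion, exactly the kind of ``super/submultiplicativity under $\oplus$ and $\times$'' you gesture at, but restricted to a single coordinate, which is what makes it tractable. Without that lemma and without a named $P_d$, the proposal does not establish the theorem.

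Your propagation by iterated bipyramids is a genuine deviation from the paper and is dubious: $\bip$ doubles the facet count, so $\bip^m P_5$ has $f_{d-1}$ growing exponentially, and the facet-count constraint that does all the work in the paper evaporates immediately. The paper instead passes to even dimension $d=2k$ by a single prism $I\times\HS_k$, which keeps $f_{d-1}=2d+2$, so the same Proposition applies and a dominating Hanner polytope would have to be of the form $I\times H'$, reducing to the odd case. An ad hoc check shows $\bip\HS_3$ happens to work in dimension $6$, but this relies on a close call in the $f_5$ coordinate rather than a uniform mechanism, and you give no argument that it persists; the prism route is both cleaner and the one that actually closes the induction.
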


The paper is organized as follows. In Section \ref{sec:AB_dim4} we establish a
lower bound on the flag-vector functional $g^\tor_2$ on the class of cs
$4$-polytopes. Together with some combinatorial and geometric reasoning this
leads to a proof of Theorem \ref{thm:dim4}. In Section \ref{sec:C_dim4}, we
exhibit a centrally symmetric $4$-polytope and a flag vector functional that
disprove conjecture~\C. In Section \ref{sec:hypsimp} we consider centrally
symmetric hypersimplices in odd dimensions; combined with basic properties of
Hanner polytopes, this gives a proof of Theorem \ref{thm:dim5}.  We close with
two further interesting examples of centrally symmetric polytopes in
Section~\ref{sec:examples}.

{\bf Acknowledgements.} 
We are grateful to Gil Kalai for his inspiring conjectures, and for pointing
out the connection to symmetric stresses for Theorem~\ref{thm:g2_symm}.

\section{Conjectures \A\ and \B\ in dimensions $\mathbf{d \le 4}$}
\label{sec:AB_dim4}

In this section we prove Theorem \ref{thm:dim4}, that is,  the conjectures \A\
and \B\ for polytopes in dimensions $d \le 4$.  The work of Stanley
\cite{stan87} implies \A\ and \B\ for simplicial and thus also for simple
polytopes.  Furthermore, if $f_0(P) = 2d$, then $P$ is linearly isomorphic to
a crosspolytope.  \emph{Therefore, we assume throughout this section that all
cs $d$-polytopes $P$ are neither simple nor simplicial, and that $f_{d-1}(P)
\ge f_0(P) \ge 2d+2$.} 

The main work will be in dimension $4$. The claims for dimensions one, two,
and three are vacuous, clear, and easy to prove, in that order. In particular,
the case $d = 3$ can be obtained from an easy $f$-vector calculation. But, to
get in the right mood, let us sketch a geometric argument.  Let $P$ be a cs
$3$-polytope.  Since $P$ is not simplicial, $P$ has a non-triangle facet.
Let $F$ be a facet of $P$ with $f_0(F) \ge 4$ vertices.  Let $F_0 = P \cap H$
with $H$ being the hyperplane parallel to the affine hulls of $F$ and of $-F$
that contains the origin. Now, $F_0$ is a cs $2$-polytope and it is clear that
every face $G$ of $P$ that has a nontrivial intersection with $H$ is neither a
face of $F$ nor of $-F$. We get
\[
    s(P)\ \ \ge\ \  s(F) + s(F_0) + s(-F) \ge 3 \cdot 3^2.
\]
This type of argument fails in dimensions $d \ge 4$.  Applying small
(symmetric) perturbations to the vertices of a prism over an octahedron yields
a cs $4$-polytope with the following two types of facets: prisms over a
triangle and square pyramids. Every such facet has less than $3^3$ faces,
which shows that less than a third of the alleged $81$ faces are concentrated
in any facet.

Let's come back to dimension~$4$.  The proof of the conjectures \A\ and \B\
splits into a combinatorial part (\emph{$f$-vector yoga}) and a geometric
argument. We partition the class of cs $4$-polytopes into \emph{large} and
(few) \emph{small} polytopes, where ``large'' means that
\begin{equation}\label{eqn:24}
    f_0(P) + f_3(P)\ \ \ge\ \ 24.
\end{equation}

We will reconsider an argument of Kalai \cite{kal87} that proves a lower bound
theorem for polytopes and, in combination with flag-vector identities, leads
to a tight flag-vector inequality for cs $4$-polytopes. With this new tool, we
prove that (\ref{eqn:24}) implies conjectures \A\ and \B\ for dimension~$4$.%

We show that the \emph{small} cs $4$-polytopes, i.e.\ those not satisfying
(\ref{eqn:24}), are \emph{twisted prisms}, to be introduced in Section
\ref{sec:twist}, over $3$-polytopes. We then establish basic properties of
twisted prisms that imply the validity of conjectures \A\ and~\B\ for small cs
$4$-polytopes.

\subsection{Rigidity with symmetry and flag-vector inequalities}
\label{sec:rigid}

For a general simplicial $d$-polytope $P$ the \emph{$h$-vector} $h(P)$ is the
ordered collection of the coefficients of the polynomial $h_P(t) := f_P(t-1)$,
the \emph{$h$-polynomial} of $P$. Clearly, $h_P(t)$ encodes the same
information as the $f$-polynomial, but additionally $h_P(t)$ is a unimodal,
palindromic polynomial with non-negative, integral coefficients (see e.g.\
\cite[Sect.~8.3]{zie95}). This gives more insight in the nature of face
numbers of simplicial polytopes and, in a compressed form, this numerical
information is carried by its \emph{$g$-vector} $g(P)$ with $g_i(P) = h_i(P) -
h_{i-1}(P)$ for $i = 1, \dots, \lfloor \frac{d}{2} \rfloor$. There are various
interpretations for the $h$- and $g$-numbers and, via the $g$-Theorem, they
carry a complete characterization of the $f$-vectors of simplicial
$d$-polytopes.

For general $d$-polytopes a much weaker invariant is given by the
\emph{generalized} or \emph{toric} $h$-vector $h^\tor(P)$ introduced by
Stanley~\cite{stan87-2}.  In contrast to the ordinary $h$-vector, the toric
$h$-numbers $h_i^\tor(P)$ are not determined by the $f$-vector: They are
linear combinations of the face numbers and of other entries of the
flag-vector of $P$. For example, 
\[
    g_2^\tor \ =\ h^\tor_2 - h^\tor_1\ \ =\ \
    f_1 + f_{02} - 3f_2 - df_0 + \tbinom{d+1}{2}.
\] 
The corresponding toric $h$-polynomial shares the same properties as its
simplicial relative but, unfortunately, carries quite incomplete information
about the $f$-vector. 

For example, in the case of $P$ being a \emph{quasi-simplicial} polytope,
i.e.\ if every facet of $P$ is simplicial, the toric $h$-vector depends only
on the $f$-numbers $f_i(P)$ for $0 \le i \le \lfloor \frac{d}{2} \rfloor$ and,
therefore, does not carry enough information to determine a lower bound on
$s(P)$ for $d\ge 5$. However, the information gained in dimension $4$ will be
a major step in the direction of a proof of Theorem \ref{thm:dim4}. To be more
precise, for the class of centrally symmetric $d$-polytopes there is a
refinement of the flag-vector inequality $g^\tor_2 = h^\tor_2 - h^\tor_1 \ge
0$.

\begin{thm}\label{thm:g2_symm}
  Let $P$ be a centrally symmetric $d$-polytope. Then 
  \[
        g_2^\tor (P) \ \ =\ \ f_1(P) + f_{02}(P) - 3f_2(P) - df_0(P) +
        \tbinom{d+1}{2} \ \ \ge\ \ \tbinom{d}{2}-d.
  \]
\end{thm}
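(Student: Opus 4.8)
The plan is to prove the inequality by the rigidity method, adapting Kalai's argument for the Lower Bound Theorem (from \cite{kal87}) to exploit the central symmetry, as hinted by the acknowledgement to Kalai concerning symmetric stresses. First I would reduce to a convenient model: since the toric $h$-vector and hence $g_2^\tor$ is preserved under stellar-type modifications on faces of dimension $>\lfloor d/2\rfloor$, or more simply since we may work with a simplicial (or at least quasi-simplicial) cs polytope whose $2$-skeleton matches that of $P$, I would pass to a simplicial cs $d$-polytope $Q$ and recall the identity expressing $g_2^\tor(Q)$ in terms of $f_1(Q)$, $f_0(Q)$, i.e. $g_2 = f_1 - df_0 + \binom{d+1}{2}$ in the purely simplicial case. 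The classical rigidity argument says: the graph (1-skeleton) of a simplicial $d$-polytope is generically $d$-rigid, so if one picks a generic embedding, the rigidity matrix has rank $df_0 - \binom{d+1}{2}$, which forces $f_1 \ge df_0 - \binom{d+1}{2}$, i.e. $g_2 \ge 0$. The space of self-stresses then has dimension exactly $g_2$.

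The key new step is to produce $\binom{d}{2}-d$ linearly independent self-stresses from the symmetry, rather than merely $0$. Here I would use that $P = -P$ acts on the vertex set as a free involution $\sigma$, and that the rigidity/stress spaces carry a corresponding $\mathbb{Z}/2$-action. One expects the stress space to split into a symmetric and an antisymmetric part, and the point is to exhibit a canonical family of symmetric stresses of the required dimension coming from the affine dependences induced by antipodality: for each pair of coordinate directions, or more precisely for each element of a basis of the space of symmetric bilinear forms / quadratic forms vanishing appropriately, one gets a stress supported on antipodal edge-pairs. Counting these gives $\binom{d}{2}$ candidate stresses, of which $d$ may be ``trivial'' (absorbed by the affine part / the translational-like symmetries), leaving $\binom{d}{2}-d$ genuinely independent symmetric self-stresses. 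Concretely I would take the embedding to be centrally symmetric (vertices in antipodal pairs $\pm p_v$) and write down, for a symmetric matrix $A$ with $\mathrm{tr}(A)$ suitably normalised, the candidate stress $\omega_{uv} = \langle p_u, A\, p_v\rangle$ on edges, check it is a self-stress (this is where the structure of the rigidity matrix and the relation $\sum_v \omega_{uv}(p_u - p_v) = 0$ must be verified, using central symmetry to kill the problematic terms), and then compute the dimension of the resulting space, subtracting off the $d$-dimensional subspace of stresses that are forced to vanish against the affine functionals.

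The main obstacle will be this last point: showing that the symmetric stresses so constructed are \emph{linearly independent} and that the ``trivial'' part is exactly $d$-dimensional, not more and not less. Establishing the lower bound $\ge \binom{d}{2}-d$ requires both that enough stresses exist and that they are not secretly dependent; the dependence analysis is essentially a statement about which quadratic forms $\langle x, Ax\rangle$ restrict to affine functions on the vertex set of a centrally symmetric polytope in general position, and handling it cleanly — especially reconciling the simplicial reduction with the flag-vector identity in the stated (non-simplicial) generality, and making sure the symmetry-perturbation does not destroy genericity — is the delicate part. Once the dimension count is in hand, translating back via $g_2^\tor(P) = \dim(\text{stress space})$ and the toric identity quoted in the theorem gives the claimed bound $g_2^\tor(P) \ge \binom{d}{2}-d$, with equality corresponding to polytopes (such as cross-polytopes and prisms over cross-polytopes) that carry no stresses beyond the symmetric ones.
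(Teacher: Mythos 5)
Your plan correctly identifies the general framework — rigidity, stresses, and the need to exploit central symmetry via ``symmetric stresses'' — which is indeed the territory the paper works in. But the concrete mechanism you propose is different from the paper's, and as written it has real gaps.

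First, the reduction to a simplicial (or quasi-simplicial) cs polytope is neither needed nor clearly available. The paper works directly with the original polytope: it invokes Whiteley's theorem that the graph obtained by triangulating the $2$-skeleton of any $d$-polytope (with no new vertices) gives an infinitesimally rigid framework, and observes that this graph has exactly $e = f_1 + f_{02} - 3f_2$ edges, whence $\dim S(G,\p) = e - dv + \binom{d+1}{2} = g_2^\tor(P)$ on the nose. You never have to replace $P$ by a simplicial relative, and doing so (while preserving $g_2^\tor$, the cs structure, and genericity) is exactly the kind of delicate step that would need a proof of its own.

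Second, and more importantly, your route to the lower bound is an explicit construction of symmetric stresses $\omega_{uv} = \langle p_u, A p_v\rangle$ from symmetric matrices $A$, followed by a dimension count. This construction is not verified and is unlikely to work as stated: the equilibrium condition $\sum_{v} \omega_{uv}(p_u - p_v) = 0$ does not hold for such $\omega$ in general, and central symmetry alone does not ``kill the problematic terms''. Your dimension bookkeeping is also off: symmetric $d\times d$ matrices form a $\binom{d+1}{2}$-dimensional space, not $\binom{d}{2}$; the number $\binom{d}{2}$ in the theorem comes from the Lie algebra of the \emph{orthogonal} group (skew-symmetric matrices), which is how it actually enters the paper's argument. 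The paper never constructs explicit stresses at all. Instead it augments the edge map to
$\Sphi(\p) = (\Phi(\p),\, \p_{V^+} + \p_{V^-})$
so that the fiber $\Sphi_\p$ consists of centrally symmetric realizations; the subgroup $O(\R^d)$ acts locally freely on this fiber, forcing $\dim\Sphi_\p \ge \binom{d}{2}$ and hence $\rk R^\sym \le dv - \binom{d}{2}$. Using the antipodal block structure of $R$ one computes $\rk R^\sym = \tfrac{dv}{2} + \rk(R_1 - R_2)$ and thus $\dim S^\sym(G,\p) = \tfrac{e}{2} - \rk(R_1 - R_2) \ge \tfrac{e}{2} - \tfrac{dv}{2} + \binom{d}{2}$. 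Combined with $S^\sym \subseteq S$ and $\dim S = e - dv + \binom{d+1}{2}$, a one-line arithmetic rearrangement yields $g_2^\tor(P) \ge \binom{d}{2} - d$. So the bound emerges from a rank inequality, not from exhibiting $\binom{d}{2}-d$ named stresses, and there is no ``linear independence of constructed stresses'' issue to face. Your intuition about where the extra $\binom{d}{2}$ should come from (the extra rigid motions compatible with central symmetry) is right in spirit, but the path you sketch to actually extract the inequality is not the paper's path and, as it stands, is not a proof.
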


With Euler's equation and the Generalized Dehn-Sommerville equations \cite{bb85}
it is routine to derive the following inequality for the class of cs $4$-polytopes.

\begin{cor} \label{cor:g2_symm}
    If $P$ is a centrally symmetric $4$-polytope, then
    \begin{equation}
        f_{03}(P)\ \ \ge\ \ 3f_0(P) + 3f_3(P) - 8.
        \label{eqn:g2}
    \end{equation}
\end{cor}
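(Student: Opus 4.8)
The plan is to specialize Theorem \ref{thm:g2_symm} to $d = 4$ and then eliminate all flag numbers except $f_0$, $f_3$, and $f_{03}$ using the standard linear relations available for $4$-polytopes. For $d = 4$, Theorem \ref{thm:g2_symm} reads
\[
    f_1(P) + f_{02}(P) - 3f_2(P) - 4f_0(P) + 10 \ \ \ge\ \ \tbinom{4}{2} - 4 \ =\ 2.
\]
So after rearranging, the target inequality $f_{03} \ge 3f_0 + 3f_3 - 8$ will follow once I show that the left-hand side $f_1 + f_{02} - 3f_2 - 4f_0 + 10 - 2$ can be rewritten, via the $4$-polytope relations, as $f_{03} - 3f_0 - 3f_3 + 8$. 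In other words, the whole content of the corollary is the identity
\[
    f_{03}(P) \ =\ f_1(P) + f_{02}(P) - 3f_2(P) - f_0(P) - 3f_3(P) + 16
\]
valid for \emph{all} $4$-polytopes (not just cs ones), after which the inequality is immediate.

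To establish that identity I would assemble the linear relations on the flag vector of a $4$-polytope. The ingredients are: Euler's equation $f_0 - f_1 + f_2 - f_3 = 0$; the two-step flag relations coming from Euler's equation applied to each facet and to each vertex figure (these express $f_{03}$, $f_{02}$, $f_{01}$, $f_{13}$, $f_{23}$ in terms of one another and the $f_i$ — e.g.\ summing Euler over all facets gives a relation among $f_{03}, f_{13}, f_{23}$ and $f_i$, and summing over vertex figures gives another); and, more systematically, the Generalized Dehn–Sommerville equations of Bayer–Billera \cite{bb85}, which for $d = 4$ cut the flag vector down to a space of dimension $4$ (spanned affinely by, say, $f_0, f_1, f_2, f_{02}$, with everything else — in particular $f_{03}$ and $f_3$ — an explicit linear function of these). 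Plugging those expressions into both sides of the claimed identity reduces it to a tautology. This is the ``routine'' derivation the text refers to; it is pure linear algebra over $\Z$ with no geometry.

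The only mild obstacle is bookkeeping: one must be careful that $f_{02}$ here counts vertex–$2$-face incident pairs (chains of length two), and that the coefficients in the Dehn–Sommerville relations are applied with the right orientation, since sign errors propagate. A clean way to organize it is to fix the affine basis $(f_0, f_3, f_{03})$ together with two more coordinates and write the $5 \times 2^{[4]}$ matrix of the Dehn–Sommerville plus Euler relations, then solve. Once the identity $f_{03} = f_1 + f_{02} - 3f_2 - f_0 - 3f_3 + 16$ is in hand, the corollary is nothing more than adding $3f_0 + 3f_3 - 8$ to the inequality $g_2^\tor(P) \ge 2$ from Theorem \ref{thm:g2_symm} and reading off \eqref{eqn:g2}. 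I do not expect any genuine difficulty beyond this; the substance of the corollary lives entirely in Theorem \ref{thm:g2_symm}, and the corollary merely repackages it in the most convenient flag-number form for the $f$-vector arguments of the next subsection.
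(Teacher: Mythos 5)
Your plan is exactly the paper's: specialize Theorem~\ref{thm:g2_symm} to $d=4$ and eliminate $f_1$, $f_2$, $f_{02}$ in favour of $f_0$, $f_3$, $f_{03}$ using Euler plus the Generalized Dehn--Sommerville relations, so the strategy is sound and there is no gap in the approach. However, the identity you actually wrote down is wrong. From the target you correctly reduce to showing $f_1 + f_{02} - 3f_2 - 4f_0 = f_{03} - 3f_0 - 3f_3$, which solved for $f_{03}$ gives
\[
  f_{03} \ =\ f_1 + f_{02} - 3f_2 - f_0 + 3f_3,
\]
with a \emph{plus} sign on $3f_3$ and \emph{no} additive constant; your ``in other words'' version $f_{03} = f_1 + f_{02} - 3f_2 - f_0 - 3f_3 + 16$ flips that sign and introduces a spurious $+16$. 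A quick sanity check on $C_4^\dual$ (where $f=(8,24,32,16)$, $f_{02}=96$, $f_{03}=64$) gives $24+96-96-8+48=64$ for the corrected identity but $-16$ for yours. For the record, the relations you need for $d=4$ are $f_{01}=2f_1$, $f_{23}=2f_2$, $f_{02}=f_{12}=f_{13}$, and (Euler on vertex figures) $f_{01}-f_{02}+f_{03}=2f_0$, whence $f_{02}=f_{03}-2f_0+2f_1$; substituting into $g_2^\tor = f_1+f_{02}-3f_2-4f_0+10$ and then using Euler $f_1-f_2=f_0-f_3$ yields $g_2^\tor = f_{03}-3f_0-3f_3+10$, and $g_2^\tor\ge\tbinom{4}{2}-4=2$ gives the corollary. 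So: right method, but do carry out the linear algebra rather than guessing the identity, since the one you stated would not close the argument.
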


We will prove Theorem \ref{thm:g2_symm} using the theory of
\emph{infinitesimally rigid frameworks}.  For information about rigidity
beyond our needs we refer the reader to Roth~\cite{rot81} for a very readable
introduction and to Whiteley~\cite{whi84} and Kalai~\cite{kal89} for rigidity
in connection with polytopes.

Let $d \ge 1$ and let $G = (V,E)$ be an abstract simple undirected graph. The
\emph{edge function} associated to $G$ and $d$ is the map
\[
\begin{aligned}
    \Phi: (\R^d)^V &\rightarrow \R^E\\
                (p_v)_{v\in V} &\mapsto \left( \|p_u - p_v\|^2\right)_{uv \in
                E},
\end{aligned}
\]
which measures the (squared) lengths of the edges of $G$ for any choice of
coordinates $\p = (p_v)_{v\in V} \in (\R^d)^V$.  The pair $(G,\p)$ is called a
\emph{framework} in $\R^d$ and the points of $\Phi_\p := \Phi^{-1}(\Phi(\p))$
give the possible frameworks in $\R^d$ with constant edge lengths $\Phi(\p)$.

Let $v = |V| \ge d+1$ and let $\p$ be a generic embedding. Then the set
$\Phi_\p \subset (\R^d)^V$ is a smooth submanifold on which the group of
\emph{Euclidean/rigid motions} $E(\R^d)$ acts smoothly and faithfully.
Therefore the dimension of $\Phi_\p$ is $\dim \Phi_\p \ge \tbinom{d+1}{2}$ and
in case of equality the framework $(G,\p)$ is \emph{infinitesimally rigid}. 

The \emph{rigidity matrix} $R = R(G,\p) \in (\R^d)^{E \times V}$ of $(G,\p)$
is the Jacobian matrix of $\Phi$ evaluated at $\p$. Invoking the Implicit
Function Theorem, it is easy to see that $(G,\p)$ is infinitesimally rigid if
and only if $\rk R = dv - \tbinom{d+1}{2}$.  A \emph{stress} on the framework
$(G,\p)$ is an assignment $\wb = (\w_e)_{e\in E} \in \R^E$ of weights
$\omega_e\in\R$ to the edges $e\in E$ such that there is an equilibrium
$\sum_{u : uv \in E} \w_{uv} (p_v - p_u) = 0$ at  every vertex $v \in V$. We
denote by $S(G,\p) = \{ \wb \in \R^E : \wb R = 0 \}$ the kernel of $R^\top$,
called the \emph{space of stresses} on $(G,\p)$.

\begin{thm}[Whiteley {\cite[Thm.~8.6 with Thm.~2.9]{whi84}}]
\label{thm:poly_rigid}
    Let $P \subset \R^d$ be a $d$-polytope. Let $G = G(P) = (V,E)$ be the
    graph obtained from a triangulation of the $2$-skeleton of $P$ without new
    vertices and let $\p = \p(P)$ be the vertex coordinates. Then the
    resulting framework $(G,\p)$ is infinitesimally rigid.
\end{thm}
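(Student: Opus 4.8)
The plan is to induct on the dimension $d$, reducing infinitesimal rigidity of $(G(P),\p(P))$ to that of the facets of $P$ (which are $(d-1)$-polytopes) and then reassembling the facet frameworks by gluing and coning. Recall that a framework $(G,\p)$ with $|V|=v\ge d+1$ is infinitesimally rigid exactly when $\rk R(G,\p)=dv-\binom{d+1}{2}$, equivalently when every infinitesimal flex $\mathbf{u}=(u_x)_{x\in V}$ --- every solution of $\langle p_x-p_y,\ u_x-u_y\rangle=0$ for all $xy\in E$ --- is the restriction to $V$ of an infinitesimal Euclidean motion $x\mapsto Sx+t$ with $S$ skew-symmetric. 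By the static--kinematic duality (Whiteley \cite[Thm.~2.9]{whi84}) this is further equivalent to \emph{static rigidity}: every equilibrium load on the vertices is resolved by bar forces $\w=(\w_e)_{e\in E}$ along the edges. Both the combinatorial and the static side will be used, the latter being the smoother setting for the reassembly step.

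For $d\le2$ the assertion is the classical fact that a triangulated convex polygon is infinitesimally rigid, proved by induction on the number of vertices: such a triangulation always has an ``ear'', a triangle two of whose sides are boundary edges; delete the ear vertex, apply the inductive hypothesis to the smaller triangulated polygon, and reattach the two bars. Now let $d\ge3$ and let $G=G(P)$ come from a triangulation $T$ of the $2$-skeleton of the $d$-polytope $P$ without new vertices. Since every face of $P$ of dimension at most $d-2$ lies in a facet, every triangle of $T$ lies in a facet, so $G=\bigcup_F G|_F$, the union over the facets $F$ of $P$ of the induced subgraphs; moreover each $G|_F$ is the graph of a triangulation of the $2$-skeleton of the $(d-1)$-polytope $F$ without new vertices. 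By the inductive hypothesis, every facet framework $(G|_F,\p|_F)$ is infinitesimally rigid in $\operatorname{aff} F\cong\R^{d-1}$.

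The task is now to promote these $(d-1)$-dimensionally rigid pieces to an infinitesimally rigid framework in $\R^d$, and two standard lemmas do the bookkeeping. \emph{Gluing lemma}: if $(G_1,\p)$ and $(G_2,\p)$ are infinitesimally rigid in $\R^d$ and share at least $d$ affinely independent vertices, then $(G_1\cup G_2,\p)$ is infinitesimally rigid; indeed an infinitesimal flex of the union restricts on each $G_i$ to (the restriction of) an infinitesimal Euclidean motion $M_i$, the $M_i$ agree at the $d$ common affinely independent points, and two infinitesimal Euclidean motions agreeing on an affinely spanning subset of a hyperplane must coincide --- their difference is an affine map vanishing on a hyperplane, hence of rank $\le1$, hence $0$ since its linear part is skew-symmetric. \emph{Cone lemma}: for a configuration contained in a hyperplane $\R^{d-1}\subset\R^d$, the framework is infinitesimally rigid in $\R^{d-1}$ if and only if its cone over one further point placed off that hyperplane is infinitesimally rigid in $\R^d$. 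Fix an interior point $c$ of $P$; then $c\notin\operatorname{aff} F$ for every facet $F$, so by the cone lemma the cone $(G|_F*c,\cdot)$ is infinitesimally rigid in $\R^d$. Since the facet--ridge graph of $P$ is connected, list the facets as $F_1,\dots,F_m$ so that each $F_j$ shares a ridge with some earlier $F_i$; a ridge has at least $d-1$ affinely independent vertices, and together with $c$ (which lies in the affine hull of no ridge) this supplies the $d$ affinely independent common vertices needed for the gluing lemma at each stage. Therefore $\bigl(\bigcup_F(G|_F*c),\ \cdot\bigr)=(G*c,\,\p\cup\{c\})$ is infinitesimally rigid in $\R^d$.

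The remaining --- and genuinely delicate --- step is to descend from the coned framework $(G*c,\cdot)$ to $(G,\p)$ itself; coning over an \emph{interior} point does not automatically preserve infinitesimal rigidity (it does so only when the apex is taken off the entire affine hull, one dimension up), so the specific convex geometry of $P$ must enter. Concretely, an infinitesimal flex of $(G,\p)$ restricts on each facet $F$, after projecting away from the normal direction of $\operatorname{aff} F$, to a \emph{trivial} flex of $(G|_F,\p|_F)$ inside $\operatorname{aff} F$; one must show, by propagating the partial motions $M_F$ together with the normal components across the ridge-connected boundary $\partial P$, that they assemble into a single global infinitesimal Euclidean motion of $\R^d$. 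In the static language this becomes the claim that an equilibrium load on the vertices of $P$ can be built up, facet by facet around $\partial P$, into a resolution by bar forces, with convexity guaranteeing that the residual loads created along the shared ridges (and along the auxiliary cone edges at $c$) cancel; this is precisely the content of Whiteley's polytope theorem \cite[Thm.~8.6]{whi84}, and the static--kinematic equivalence \cite[Thm.~2.9]{whi84} then returns the infinitesimal rigidity of $(G,\p)$. I expect this reconciliation of the per-facet data --- the only place where convexity is essential --- to be the main obstacle; the base case, the two lemmas, and the facial and facet--ridge-graph combinatorics are routine.
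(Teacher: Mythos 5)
The paper does not prove this statement at all: it is imported verbatim from the literature (Whiteley \cite{whi84}, Thm.~8.6 combined with Thm.~2.9) and used as a black box, so there is no in-paper argument to compare yours against. Judged as a standalone proof attempt, your proposal has a genuine gap, and in fact a circularity. The base case, the gluing lemma, the cone lemma, and the facet--ridge connectivity argument are all correct and establish infinitesimal rigidity of the \emph{coned} framework $(G*c,\p\cup\{c\})$ in $\R^d$. But the theorem is about $(G,\p)$, and the passage from the cone to $G$ is not a bookkeeping step: removing the vertex $c$ deletes $v$ rows and $d$ columns from the rigidity matrix, and nothing in what you have written controls how much the rank drops. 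Your own text concedes this --- the ``remaining and genuinely delicate step'' is justified by the sentence ``this is precisely the content of Whiteley's polytope theorem [Thm.~8.6]'', which is exactly the statement you are supposed to be proving. So the argument, as written, assumes its conclusion at the decisive moment.

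The missing content is substantial. For $d=3$ the descent is essentially the Dehn--Alexandrov infinitesimal rigidity theorem for convex surfaces, extended by Whiteley (via vertex splitting) from simplicial $3$-polytopes to triangulated $2$-skeletons; this cannot be obtained from the cone-and-glue mechanism alone, since that mechanism never uses convexity of $P$ beyond $c$ being off each facet hyperplane, whereas infinitesimal rigidity of triangulated $2$-spheres genuinely fails without convexity. For $d\ge4$ one must additionally show that the $v$ cone edges at the interior apex $c$ are statically redundant in $G*c$ --- e.g.\ by exhibiting stresses of $(G*c,\cdot)$ supported on them, or by resolving an arbitrary equilibrium load while systematically cancelling the forces induced in the apex edges --- and this is where the convex position of the vertices enters again. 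If you want a self-contained proof you would need to supply these two ingredients; otherwise the honest course (and the one the paper takes) is simply to cite Whiteley's theorem rather than to present a derivation that terminates in a citation of itself.
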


The above theorem makes no reference to the triangulation of the $2$-skeleton.
The important fact to note is that the graph $G$ of Theorem
\ref{thm:poly_rigid} will have  exactly $e := |E| = f_1(P) + f_{02}(P) - 3
f_2(P)$ edges: In addition to the $f_1(P)$ edges of $P$, $k-3$ edges are
needed for every $2$-face with $k$ vertices.

For the dimension of the space of stresses $S(G,\p)$ we get
\[
\begin{aligned}
  0\ \ \le\ \ \dim S(G,\p)&\ \ =\ \ e - \rk R\\ 
                          &\ \ =\ \ e - dv + \tbinom{d+1}{2} \\
                          &\ \ =\ \ f_1(P) + f_{02}(P) - 3 f_2(P) - df_0(P) +
                                   \tbinom{d+1}{2} \\
                          &\ \ =\ \  g^\tor_2(P).
\end{aligned}
\]

Now let $P$ be a centrally symmetric $d$-polytope, $d\ge3$. Let $G = G(P) =
(V,E)$ be the graph in Theorem \ref{thm:poly_rigid} obtained from a
triangulation that respects the central symmetry of the $2$-skeleton and let
$\p = \p(P)$ be the vertex coordinates of $P$. The antipodal map $\x \mapsto
-\x$ induces a free action of the group $\mathbb{Z}_2$ on the graph $G$. 
We denote by $\bV = V / \mathbb{Z}_2$ and $\bE = E / \mathbb{Z}_2$ the
respective quotients and, after choosing representatives, we denote by $V =
V^+ \uplus V^-$ and $E = E^+ \uplus E^-$ the decompositions of the set of
vertices and edges according to the action. Since the action is free we have
$|\bV| = |V^\pm| = \tfrac{v}{2}$ and $|\bE| = |E^\pm| = \tfrac{e}{2}$.  

Concerning the rigidity matrix, it is easy to see that
\[
    R\ \ =\ \ 
    \bordermatrix{
                     & \scriptstyle V^+ &\scriptstyle V^- \cr
    \scriptstyle E^+ & \phantom{-}R_1  & \phantom{-}R_2   \cr
    \scriptstyle E^- & -R_2 &-R_1                         \cr
    }
    \ \in\ (\R^d)^{V \times E}
\]
with labels above and to the left of the matrix. The embedding $\p = \p(P)$
respects the central symmetry of $G$ and we can augment the edge function by a
second component that takes the symmetry information into account:
\begin{eqnarray*}
    \Sphi:\quad (\R^d)^{V^+} \times (\R^d)^{V^-} &\rightarrow& \R^E \times
                                                    (\R^d)^{\bV}\\
    \p = (\p_{V^+}, \p_{V^-}) &\mapsto& \left( \Phi(\p), \p_{V^+} + \p_{V^-}
                                                    \right).
\end{eqnarray*}
Thus $\Sphi$ additionally measures the degree of asymmetry of the embedding.
By the symmetry of $P$, $\Sphi(\p) = (\Phi(\p), 0 )$ for $\p=\p(P)$.  The
preimage of this point under $\Sphi$ is $\Sphi_\p \subset \Phi_\p$, the set of
all centrally symmetric embeddings with edge lengths $\Phi(\p)$.  Any small
(close to identity) rigid motion that fixes the origin takes $\p\in\Sphi_\p$
to a \emph{distinct} centrally symmetric realization $\p'\in\Sphi_\p$.  Thus
the action of the subgroup $O( \R^d )$, the group of orthogonal
transformations, on $\Sphi_\p$ locally gives a smooth embedding.  It follows
that~$\dim \Sphi_\p \ge\dim O(\R^d)= \tbinom{d}{2}$ and thus
\begin{equation}\label{eqn:rk_Rcs1}
    \rk R^\sym \ \ \le\ \  dv - \tbinom{d}{2},
\end{equation}
where we can compute the rank of $R^\sym$, the Jacobian of $\Phi^\sym$ at
$\p$, as
\begin{equation}\label{eqn:rk_Rcs2}
    \rk R^\sym\ \ =\ \   
    \rk \begin{pmatrix} 
        \phantom{-}R_1 & \phantom{-}R_2 \\
        -R_2 & -R_1 \\
        I_{V^+} & I_{V^-} \\
    \end{pmatrix}
  \ \ =\ \ \frac{dv}{2} + \rk \left( R_1 - R_2 \right).
\end{equation}

\begin{proof}[Proof of Theorem \ref{thm:g2_symm}]
    Consider the space of \emph{symmetric stresses}, that is, the linear
    subspace 
    \[
    S^\sym(G,\p) = \{ \w = (\w_{E^+},\w_{E^-}) \in S(G,\p) : \w_{E^+} =
    \w_{E^-} \}
        \cong \{ \overline{\w} \in \R^{\bE} : \overline{\w}\left( R_1 - R_2 \right) = 0
        \}.
    \]
    From (\ref{eqn:rk_Rcs1}) and (\ref{eqn:rk_Rcs2}) it follows that
    \[
        \dim S^\sym(G,\p)\ \ =\ \ 
        \frac e2 - \rk(R_1-R_2)
        \ \ \ge\ \ \frac{e}{2} - \frac{dv}{2} + \binom{d}{2}. 
    \]
    The theorem follows from noting that $S^\sym(G,\p) \subseteq S(G,\p)$ and
    therefore 
    \[
    e - dv + \binom{d+1}{2}\ \ \ge\ \ \frac{1}{2}(e - dv) + \binom{d}{2}.
    \]
\vskip-8mm
\end{proof}
\medskip

Theorem \ref{thm:g2_symm} can also be deduced from the following result of
A'Campo-Neuen \cite{aca06}; see also \cite{aca99}.

\begin{thm}[{\cite[Theorem 2]{aca06}}]\label{thm:aca}
    Let $P$ be a centrally symmetric $d$-polytope and let $h^\tor_P(t) =
    \sum_{i=0}^d h^\tor_i(P)\, t^i$ be its toric $h$-polynomial. Then the
    polynomial
    \[
        h^\tor_P(t) - h^\tor_{C_d^\dual}(t) = 
        h^\tor_P(t) - ( 1 + t)^d \in \mathbb{Z}[t]
    \]
    is palindromic and unimodal with non-negative, even coefficients. In
    particular, 
    \[
        g^\tor_i(P) = h^\tor_i(P) - h^\tor_{i-1}(P) \ge \tbinom{d}{i} -
        \tbinom{d}{i-1} \text{ for all } 1 \le i \le \left\lfloor \tfrac{d}{2}
        \right\rfloor.
    \]
\end{thm}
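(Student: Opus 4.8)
To prove Theorem~\ref{thm:aca} I would follow A'Campo-Neuen and work with the combinatorial intersection cohomology $IH(\Sigma)$ of the (complete, possibly non-rational) normal fan $\Sigma=\Sigma_P$ of $P$, using the minimal-extension-sheaf construction of Barthel--Brasselet--Fieseler--Kaup and Bressler--Lunts. Recall the three things one gets for free: $h^\tor_P(t)=\sum_i\dim IH^{2i}(\Sigma)\,t^i$ is supported in even degrees; Poincar\'e duality for $IH(\Sigma)$ makes $h^\tor_P(t)$ palindromic; and Karu's Hard Lefschetz theorem for combinatorial intersection cohomology --- injectivity of multiplication by an ample class $\ell\in IH^2(\Sigma)$ from $IH^{2i}(\Sigma)$ to $IH^{2i+2}(\Sigma)$ for $i<\tfrac d2$ --- gives unimodality, i.e.\ $g^\tor_i(P)\ge 0$. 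What must be added is a refinement of all three in the presence of the symmetry $P=-P$.

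First I would introduce the group action. The antipodal map $\sigma\colon x\mapsto-x$ is a symmetry of $\Sigma$, and since $0\in\operatorname{int}P$ every cone of the normal fan is pointed, so $\sigma$ fixes no cone of positive dimension and $\Z_2=\langle\sigma\rangle$ acts freely on $\Sigma\setminus\{0\}$. Functoriality of the construction gives an action of $\sigma$ on $IH(\Sigma)$ and a decomposition $IH^{2i}(\Sigma)=IH^{2i}_+\oplus IH^{2i}_-$ into $(\pm1)$-eigenspaces. Set $a(t)=\sum_i\dim IH^{2i}_+\,t^i$ and $b(t)=\sum_i\dim IH^{2i}_-\,t^i$, so that $h^\tor_P(t)=a(t)+b(t)$ and the ``$\sigma$-character polynomial'' is $a(t)-b(t)=\sum_i\operatorname{tr}\!\bigl(\sigma\mid IH^{2i}(\Sigma)\bigr)t^i$.

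The key step --- and, I expect, the main obstacle --- is the identity $a(t)-b(t)=(1+t)^d$ for \emph{every} centrally symmetric $d$-polytope. I would get it from a $\sigma$-twisted version of the local-to-global (inclusion--exclusion) formula that expresses $h^\tor_P$ through the local intersection cohomologies $\mathcal{L}_\tau$ of the cones $\tau\in\Sigma$: since $\sigma$ interchanges $\mathcal{L}_\tau$ with $\mathcal{L}_{-\tau}$, every pair $\{\tau,-\tau\}$ of cones of positive dimension contributes trace $0$, so the traced sum collapses to a single universal term, independent of $P$ (morally, the $\sigma$-character of $IH$ depends only on the fixed structure of $\sigma$ on $\Sigma$, which is always just $\{0\}$). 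Its value is pinned down by the single case $P=C_d^\dual$, for which $h^\tor_{C_d^\dual}(t)=(1+t)^d$ and $\sigma$ acts trivially on $IH$ (as one checks directly, e.g.\ by polarity from the simplicial picture, where $IH$ is generated in degree $2$ and opposite torus-invariant divisors are linearly equivalent). Granting this, $h^\tor_P(t)-(1+t)^d=\bigl(a(t)+b(t)\bigr)-\bigl(a(t)-b(t)\bigr)=2\,b(t)$, which visibly has non-negative even coefficients.

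It then remains to transport palindromy and unimodality to $b(t)$. Poincar\'e duality on $IH(\Sigma)$ commutes with $\sigma$, hence identifies $IH^{2i}_-$ with $IH^{2(d-i)}_-$, so $b(t)$ is palindromic. For unimodality, note that since $P$ is itself centrally symmetric its support function is $\sigma$-invariant and thus defines an ample class $\ell\in IH^2(\Sigma)$ fixed by $\sigma$; multiplication by $\ell$ then commutes with $\sigma$, restricts to $IH^{2i}_-\to IH^{2i+2}_-$, and is injective there for $i<\tfrac d2$ by Karu's theorem --- so $b(t)$ is unimodal. Hence $h^\tor_P(t)-(1+t)^d=2\,b(t)$ is palindromic and unimodal with non-negative even coefficients, and comparing the coefficients of $t^{i-1}$ and $t^i$ for $1\le i\le\lfloor\tfrac d2\rfloor$ yields $g^\tor_i(P)=h^\tor_i(P)-h^\tor_{i-1}(P)\ge\binom di-\binom d{i-1}$. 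The genuinely hard inputs are the combinatorial $IH$ package for non-rational fans together with Karu's Hard Lefschetz, and --- above all --- the uniform evaluation $a(t)-b(t)=(1+t)^d$.
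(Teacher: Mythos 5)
First, a caveat on the ground rules: the paper does \emph{not} prove Theorem~\ref{thm:aca}. It is quoted from A'Campo-Neuen \cite{aca06}, with only the remark that her proof ``relies on the (heavy) machinery of combinatorial intersection cohomology for fans.'' The paper's own, self-contained contribution on this front is the elementary rigidity argument for the $i=2$ special case (Theorem~\ref{thm:g2_symm}), via infinitesimal rigidity and the space of symmetric stresses; that argument avoids intersection cohomology altogether, at the price of producing only the quadratic coefficient. So there is no internal proof of the full theorem to compare you against, and your proposal should really be measured against~\cite{aca06}.

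With that understood, your outline has the right shape and matches the framework that reference actually uses: split $IH^{2\bullet}(\Sigma)$ into eigenspaces of the antipodal involution $\sigma$, identify $h^\tor_P(t)-(1+t)^d$ with $2\,b(t)$ where $b$ records the $(-1)$-eigenspace, and carry palindromy (Poincar\'e duality commutes with $\sigma$) and unimodality (Karu's Hard Lefschetz applied to the $\sigma$-invariant ample class coming from the support function of $P$, which is $\sigma$-invariant because $P=-P$) over to $b(t)$. Those reductions are fine. The genuine gap is exactly the one you flag: the evaluation $a(t)-b(t)=(1+t)^d$. As written, ``a $\sigma$-twisted local-to-global formula collapses to the contribution of the unique fixed cone $\{0\}$'' is a heuristic, not an argument, and it is easy to run it naively and get the wrong answer. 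For instance, the defining recursion $h^\tor_P(t)=\sum_{F}g^\tor_F(t)\,(t-1)^{\,d-1-\dim F}$, the sum taken over proper faces $F$ of $P$ including $\emptyset$, has exactly one $\sigma$-fixed index, namely $F=\emptyset$, and its term is $(t-1)^d$, not $(1+t)^d$. So ``the $\sigma$-character equals the fixed-index term'' is false for this sum; whichever complex you localize on must be chosen and graded with care, and one must track that $\sigma$ acts on $\mathrm{Sym}(V^*)$ by $(-1)^{\deg}$ and nontrivially on the stalk over $\{0\}$. Supplying the precise Lefschetz-type trace statement on the cellular cochain complex built from the minimal extension sheaf is precisely the technical content you would need to import from~\cite{aca06}; until that is done, the central identity --- and hence the theorem --- is asserted rather than proved.
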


The proof of Theorem \ref{thm:aca} relies on the (heavy) machinery of
combinatorial intersection cohomology for fans. Theorem \ref{thm:g2_symm}
concerns the special case of the coefficient of the quadratic term. In light
of McMullen's \emph{weight algebra} \cite{mcm96}, it would be interesting to
know whether/how Theorem \ref{thm:aca} can be deduced by considering
(generalized) stresses. A connection between the combinatorial intersection
cohomology set-up for fans and rigidity was established by
Braden~\cite[Sect.~2.9]{Braden06}.

\subsection{Large centrally symmetric $\mathbf{4}$-polytopes} 
\label{sec:large_4polys}

In order to prove conjectures \A\ and \B\ for large polytopes, we need one
more ingredient.
\begin{prop} 
    Let $P$ be a $4$-polytope. Then 
    \begin{equation}
        \begin{aligned}
        f_{03}(P) &\ \ \le\ \ 4 f_2(P) - 4 f_3(P)\\
                  &\ \ =  \ \ 4 f_1(P) - 4 f_0(P).\\
        \end{aligned}
        \label{eqn:f03}
    \end{equation}
    Equality holds if and only if $P$ is \emph{center-boolean}, i.e.\ if every
    facet is simple.
\end{prop}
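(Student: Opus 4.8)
The plan is to count incidences between vertices and facets in two ways. For a $4$-polytope $P$, the quantity $f_{03}(P)$ is the number of pairs $(v,F)$ with $v$ a vertex, $F$ a facet, and $v\in F$; equivalently $f_{03}(P)=\sum_{F}f_0(F)$, the sum of the vertex numbers of the facets. Each facet $F$ is a $3$-polytope, so by Euler's formula $f_0(F)-f_1(F)+f_2(F)=2$, and since $F$ is a $3$-polytope every $2$-face (of $F$) has at least $3$ edges, giving $2f_1(F)\ge 3f_2(F)$, hence $f_1(F)\le 3f_0(F)-6$ and therefore $f_0(F)\ge\tfrac12 f_1(F)+\ldots$; more directly one uses $f_2(F)\le 2f_0(F)-4$. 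I would instead phrase the bound on the facet level as: for a $3$-polytope $F$, $f_0(F)\le 2f_2(F)-4$ with equality iff $F$ is simplicial — but here we want an upper bound on $\sum_F f_0(F)$, so the useful inequality is dual: $f_0(F)$ versus $f_2(F)$. Summing the correct per-facet inequality over all facets $F$ of $P$ and using $\sum_F f_2(F)=2f_2(P)$ (each ridge lies in exactly two facets) and $\sum_F f_3(\text{trivial})=f_3(P)$ yields $f_{03}(P)=\sum_F f_0(F)\le 4f_2(P)-4f_3(P)$.

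Concretely: for each $3$-polytope $F$ one has $2f_1(F)=\sum_{G\ \text{2-face of }F}(\#\text{edges of }G)\ge 3f_2(F)$, and combined with Euler $f_0(F)=f_1(F)-f_2(F)+2$ this gives $f_0(F)\le 2f_1(F)-2f_2(F)\cdot\tfrac{?}{}$; rather than fiddle, note the clean statement is $f_0(F)\le 2f_2(F)-4$ for every $3$-polytope $F$ (dual to $f_2(F)\le 2f_0(F)-4$), with equality exactly when $F$ is simple (equivalently, the dual is simplicial). Summing over the $f_3(P)$ facets and substituting $\sum_F f_2(F)=2f_2(P)$ gives
\[
f_{03}(P)\ =\ \sum_{F\ \text{facet}} f_0(F)\ \le\ \sum_{F}\bigl(2f_2(F)-4\bigr)\ =\ 4f_2(P)-4f_3(P),
\]
which is the first line of (\ref{eqn:f03}). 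For the second line, apply Euler's relation for the $4$-polytope $P$, namely $f_0(P)-f_1(P)+f_2(P)-f_3(P)=0$, which gives $f_2(P)-f_3(P)=f_1(P)-f_0(P)$ and hence $4f_2(P)-4f_3(P)=4f_1(P)-4f_0(P)$.

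Finally, the equality characterization: equality in the sum holds iff equality $f_0(F)=2f_2(F)-4$ holds for every facet $F$, i.e.\ iff every facet of $P$ is a simple $3$-polytope — which the statement calls \emph{center-boolean}. The only mild care needed is to confirm the per-facet inequality and its equality case for $3$-polytopes, which is a standard consequence of Euler's formula together with the fact that every $2$-face of a $3$-polytope is at least a triangle, with equality exactly when $P$ is simplicial; dualizing gives the ``simple'' version used above. I do not anticipate any real obstacle here — the whole argument is double counting plus Euler — the only thing to get right is which direction of the $3$-polytope inequality (simplicial vs.\ simple) produces an \emph{upper} bound on $\sum_F f_0(F)$, and tracking the equality condition through the summation.
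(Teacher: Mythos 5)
Your proof is correct and takes essentially the same approach as the paper: both sum a per-facet inequality derived from Euler's relation over all facets of $P$ and double-count ridges, the paper phrasing the per-facet bound as $3f_0(F)\le 2f_1(F)$ and routing through $f_{13}$ and a Generalized Dehn--Sommerville equation, while you use the equivalent form $f_0(F)\le 2f_2(F)-4$ together with $\sum_F f_2(F)=2f_2(P)$ directly. (Your intermediate remark briefly tags the equality case of $f_0(F)\le 2f_2(F)-4$ as ``simplicial''; you correct this later --- the equality case is that $F$ is \emph{simple}, which is what the final argument uses.)
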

\begin{proof}
    The inequality was first proved by Bayer \cite{bay87}. Every facet $F$ of
    $P$ is a $3$-polytope satisfying $3f_0(F) \leq 2f_1(F)$. By summing up
    over all facets of $P$ we get
    \[ 
        3f_{03}(P) = \sum_{F\text{ facet}} 3f_0(F) \leq \sum_{F\text{ facet}}
        2f_1(F) = 2f_{13}(P) . 
    \]
    By one of the Generalized Dehn-Sommerville Equations \cite{bb85} we have
    \[ 
        f_{03} - f_{13} + f_{23} = 2 f_3 , 
    \]
    which, together with $f_{23} = 2f_2$ immediately implies the asserted
    inequality.  Equality holds if the above inequality for $3$-polytopes
    holds with equality for all facets of $P$, which means that all facets are
    simple $3$-polytopes. The equality in the assertion is Euler's equation.
\end{proof}

Combining the inequalities (\ref{eqn:g2}) and (\ref{eqn:f03}), we obtain
\begin{equation}
    \begin{aligned}
    f_2 &\ \ \ge\ \  \tfrac{1}{4}(3f_0 + 7f_3) -2 \ \ = \ \ f_3 +
                     \tfrac{3}{4}(f_0+f_3) - 2\\ 
    f_1 &\ \ \ge\ \  \tfrac{1}{4}(7f_0 + 3f_3) -2 \ \ = \ \ f_0 +
                     \tfrac{3}{4}(f_0+f_3) - 2.\\
    \end{aligned}
    \label{eqn:01bounds}
\end{equation}
In terms of $f_0$ and $f_3$ this gives
\[
    s(P) \ge \tfrac{14}{4}(f_0 + f_3) - 3 \ge 81
\]
where the last inequality holds if $P$ is large.

To prove conjecture \B\ for large polytopes, we have to show that the
$f$-vector of every large polytope is component-wise larger than the
$f$-vector of one of the following four Hanner polytopes:
\[
    \begin{array}{l|@{\quad(}r@{,\,}r@{,\,}r@{,\,}r@{)}}
                         & f_0 & f_1 & f_2 & f_3 \\
        \hline
        C_4              & 16  & 32  & 24  & 8   \\
        C^\dual_4        &  8  & 24  & 32  & 16  \\
        \bip C_3         & 10  & 28  & 30  & 12  \\
        \prism C^\dual_3 & 12  & 30  & 28  & 10  \\
    \end{array}
\]
It suffices to treat the case $f_0 + f_3 = 24$. Indeed, for $f_0 + f_3 \ge 26$
and $f_3 \ge f_0 \ge 10$ we get from~(\ref{eqn:01bounds}) that
\[
    \begin{aligned}
            f_1 &\ge f_0 + 18 \ge 28 \\
            f_2 &\ge f_3 + 18 \ge 30 \\
    \end{aligned}
\]
and thus $f(\bip C_3)$ is componentwise smaller.

We claim that the same bounds hold for $f_0 + f_3 = 24$. Otherwise, if $f_1\le
26$ or $f_2\le 28$, then by using $(\ref{eqn:f03})$ together with $f_0\ge10$
and $f_3\ge12$ we get in both cases that $f_{03} \le 64$.  In fact, we now get
$f_{03} = 64$ from $(\ref{eqn:g2})$, which tells us that $P$ is \emph{center
boolean}, i.e.\  every facet is simple.  Granted that every facet of $P$ is
simple and has at most $6$ vertices, the possible facet types are the
$3$-simplex $\Delta_3$ and the triangular prism $\prism \Delta_2$.  Using the
assumption that $P$ is not simplicial, there is a facet $F \cong \prism
\Delta_2$. The three quad faces of $F$ give rise to three more prism facets
and, due to the number of vertices, no two of them are antipodes.  For the
same reason, any two prism facets cannot intersect in a triangle face.  In
total, we note that $P$ has exactly eight prism facets and four tetrahedra.
Since every antipodal pair of prism facets give a partition of the vertices,
it follows that every vertex is contained in a simplex and exactly $4$ prism
facets. Therefore, every vertex has degree $\ge 6$ and thus $2f_1 \ge 6 \cdot
12$. By Euler's equation, the same holds for $f_2$.

\subsection{Twisted prisms and the small polytopes}
\label{sec:twist}

The class of small cs $4$-polytopes consists of all cs $4$-polytopes $P$ with
$12 \ge f_3(P) \ge f_0(P) = 10$. Since $P$ is not simplicial, $P$ has a facet
$F$ that has $5 = d+1 = f_0(F)$ vertices, and $P = \conv( F \cup -F)$. In
particular, $F$ is a $3$-polytope with $3+2$ vertices, which does not leave
much diversity in terms of combinatorial types. The facet $F$ is
combinatorially equivalent to 
\begin{compactitem}[$\;\;\blacktriangleright$]
    \item a pyramid over a quadrilateral, or
    \item a bipyramid over a triangle.
\end{compactitem}

\begin{dfn}[Twisted prism] Let $Q \subset \R^{d-1}$ be a $(d-1)$-polytope. The
    centrally symmetric $d$-polytope
    \[
    P = \tprism Q = \conv\left( Q \times \{1\} \ \ \cup \ \  -Q \times
                                \{-1\}\right)
    \subset \R^d
    \]
    is called the \emph{twisted prism} over the base $Q$.
\end{dfn}

The following basic properties of twisted prisms will be of good service.
\begin{prop}\label{prop:tprism}
    Let $Q \subset \R^{d-1}$ be a $(d-1)$-polytope and $\tprism Q$ the
    twisted prism over~$Q$.
    \begin{compactenum}[\rm 1.]
        \item If $\mathsf{T}:\R^{d-1}\rightarrow \R^{d-1}$ is a non-singular
            affine transformation, then $\tprism Q$ and $\tprism \mathsf{T}Q$
            are affinely isomorphic.
        \item If $Q = \pyr Q^\prime$ is a pyramid with base $Q^\prime$, then
            $\tprism Q$ is combinatorially equivalent to $\bip \tprism
            Q^\prime$, a bipyramid over the twisted prism over
            $Q^\prime$.\hfill\qed
    \end{compactenum} 
\end{prop}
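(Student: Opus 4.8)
The plan is to prove both statements by writing down vertex sets explicitly and exhibiting one linear map in each case. For part~1, I would first record a fact used in both parts: for any $(d-1)$-polytope $Q$ the vertex set of $\tprism Q$ is exactly $\{(v,1):v\in\V Q\}\cup\{(-v,-1):v\in\V Q\}$. Indeed, a point of $\tprism Q$ with last coordinate $1$ is a convex combination of the generators, and since the generators in $-Q\times\{-1\}$ contribute $-1$ to the last coordinate, such a point uses only generators from $Q\times\{1\}$; hence the height-$1$ slice of $\tprism Q$ equals $Q\times\{1\}$, symmetrically at height $-1$, and every vertex lies in one of these two slices. Now write $\mathsf T\x=A\x+\mathbf b$ with $A\in\mathrm{GL}_{d-1}(\R)$. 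The key observation is that the twist is absorbed by the single linear map on $\R^d=\R^{d-1}\times\R$ given by
\[
    \widetilde{\mathsf T}(\x,t)\ =\ (A\x + t\mathbf b,\ t),
\]
which is invertible (its matrix is $\left(\begin{smallmatrix} A&\mathbf b\\ 0&1\end{smallmatrix}\right)$), maps $Q\times\{1\}$ to $(\mathsf TQ)\times\{1\}$ and $-Q\times\{-1\}$ to $-(\mathsf TQ)\times\{-1\}$, and therefore carries $\tprism Q$ onto $\tprism\mathsf TQ$.

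For part~2 I would use part~1 to normalize the picture: pick $Q'\subset\R^{d-2}$ with $\mathbf 0$ in its interior — so that $\mathbf 0$, being the center of central symmetry, lies in the interior of the full-dimensional polytope $\tprism Q'\subset\R^{d-1}$ — and realize $Q=\pyr Q'=\conv\bigl((Q'\times\{0\})\cup\{(\mathbf 0,1)\}\bigr)$ in $\R^{d-2}\times\R=\R^{d-1}$. Writing points of $\R^d$ as $(\x,s,t)\in\R^{d-2}\times\R\times\R$ and applying the vertex description above twice, the vertices of $\tprism Q$ are $(v,0,1)$ and $(-v,0,-1)$ for $v\in\V Q'$, together with $(\mathbf 0,1,1)$ and $(\mathbf 0,-1,-1)$. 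Swapping the last two coordinates by the linear automorphism $\sigma(\x,s,t)=(\x,t,s)$ turns $\tprism Q$ into the convex hull of $\tprism Q'\times\{0\}$ (which lies in the hyperplane $H=\R^{d-1}\times\{0\}$) together with the two points $(\mathbf 0,1,1)$ and $(\mathbf 0,-1,-1)$. These two points lie strictly on opposite sides of $H$, and the open segment joining them meets $H$ in $(\mathbf 0,0,0)$, which is in the relative interior of $\tprism Q'\times\{0\}$ since $\mathbf 0\in\mathrm{int}\,\tprism Q'$; by the definition of a bipyramid this convex hull is $\bip\tprism Q'$. As $\sigma$ is a linear isomorphism, $\tprism Q=\tprism\pyr Q'\cong\bip\tprism Q'$, affinely and in particular combinatorially.

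Everything here is elementary. The only ingredients that are not pure bookkeeping are the form $(\x,t)\mapsto(A\x+t\mathbf b,t)$ that absorbs the twist into a linear map, and the recognition that after the coordinate swap the twisted prism over a pyramid is presented outright as a hyperplane section $\tprism Q'\times\{0\}$ coned off by two opposite apices — together with the small check (this is where central symmetry enters, to place $\mathbf 0$ in the interior of $\tprism Q'$) that those apices satisfy the non-degeneracy conditions in the definition of a bipyramid. I expect that verification, kept clean, to be the only mild obstacle.
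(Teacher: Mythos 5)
The paper states this proposition without proof --- the inline $\qed$ in the statement signals that the argument is regarded as routine and is omitted --- so there is nothing in the paper to compare against. Your proof is correct and complete. In part~1, the linear map $(\x,t)\mapsto(A\x+t\mathbf b,t)$ is exactly the right device: it packages the affine twist into a single linear automorphism of $\R^d$, and you verify that it carries $Q\times\{1\}$ to $(\mathsf T Q)\times\{1\}$ and $-Q\times\{-1\}$ to $-(\mathsf T Q)\times\{-1\}$, hence $\tprism Q$ onto $\tprism\mathsf T Q$. In part~2, the coordinate swap $\sigma$ cleanly exhibits $\tprism\pyr Q'$ as the convex hull of $\tprism Q'\times\{0\}$ together with two antipodal apices, and you correctly identify and check the non-degeneracy condition for a bipyramid (the open apex-to-apex segment must meet the relative interior of the equatorial copy), using that $\tprism Q'$ is centrally symmetric.

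One small remark: the normalization $\mathbf 0\in\mathrm{int}\,Q'$ is superfluous. For any $(d-2)$-polytope $Q'\subset\R^{d-2}$, the twisted prism $\tprism Q'\subset\R^{d-1}$ is automatically full-dimensional and centrally symmetric, and therefore contains $\mathbf 0$ in its interior regardless of where $Q'$ sits; so this part of your set-up is harmless but not needed. The normalization that \emph{is} genuinely needed --- replacing the given pyramid by the standard one with apex $(\mathbf 0,1)$ over the base $Q'\times\{0\}$ --- is correctly justified through part~1, since any two pyramids over affinely equivalent bases are affinely isomorphic, and part~1 then transfers this to the twisted prisms.
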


The second statement of Proposition \ref{prop:tprism} actually proves the
conjectures \A\ and \B\ for half of the small cs $4$-polytopes: Let $P =
\tprism Q$ and $Q$ a pyramid over a quadrilateral. By the second statement $P$
is combinatorially equivalent to $\bip P^\prime$, where $P^\prime$ is a cs
$3$-polytope. In terms of $f$-polynomials, it is easy to show that for a
bipyramid $f_{\bip Q}(t) = (2+t)f_Q(t)$. 
Thus
\[
    s(P) = f_{\bip P^\prime}(1) = 3 f_{P^\prime}(1) \ge 3^4.
\]

Since $\B$ is true in dimension $3$ there is a $3$-dimensional Hanner polytope
$H$ such that $f_i(P^\prime) \ge f_i(H)$ for $i=0,1,2$. From the above
identity of $f$-polynomials it follows that $f_i(\bip P^\prime) \ge f_i(\bip
H)$ for $1 \le i \le 3$, where  $\bip H = I \oplus H$ is a Hanner polytope.

The next lemma shows that the above class already contains all small
polytopes, which finally settles \A\ and \B\ for dimension 4.

\begin{lem} 
    Let $d \ge 4$ and let $P = \tprism F \subset \R^d$ be a cs $d$-polytope with
    $F$ combinatorially equivalent to $\Delta_i \oplus \Delta_{d-i-1}$ and $
    1 \le i \le \tfrac{d-1}{2}$. Then 
    \[
        f_{d-1}(P)\ \ \ge\ \ 2 ( 1  + (i+1)(d-i) )\ \ \ge\ \ 2 ( 2d-1 ).
    \]
\end{lem}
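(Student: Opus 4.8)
The plan is to count the facets of $P$ directly from the twisted-prism structure. First I would realize $P=\tprism F$ as $P=\conv(F_+\cup F_-)$ with $F_+:=F\times\{1\}$ and $F_-:=(-F)\times\{-1\}$; since every vertex of $P$ has last coordinate $\pm1$, we get $P\subseteq\{-1\le x_d\le1\}$, so $F_+$ and $F_-$ are two disjoint facets of $P$. Because $F$ is combinatorially $\Delta_i\oplus\Delta_{d-i-1}$, each facet of $F$ is the join of a facet of $\Delta_i$ with a facet of $\Delta_{d-i-1}$, so $F$ has exactly $(i+1)(d-i)$ facets, each obtained by deleting a vertex $a_p$ of the $\Delta_i$-part and a vertex $b_q$ of the $\Delta_{d-i-1}$-part. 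For every facet $S$ of $F$ the set $S\times\{1\}$ is a ridge of $P$ contained in $F_+$, hence lies in a unique further facet $\widehat S\neq F_+$; the map $S\mapsto\widehat S$ is injective (two ridges of $F_+$ inside one common facet $\neq F_+$ would lie in a proper face of $F_+$ of dimension $d-2$, forcing them to coincide), and $\widehat S\neq F_-$ because $\widehat S$ meets $F_+$ while $F_+\cap F_-=\emptyset$. Symmetrically, each facet $S$ of $F$ yields a facet $\widecheck S$ adjacent to $F_-$. So
\[
   f_{d-1}(P)\ \ge\ 2+2(i+1)(d-i)-\#\{\text{facets of }P\text{ adjacent to both }F_+\text{ and }F_-\},
\]
and the heart of the argument will be to show that the subtracted number is $0$.

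For that I would argue by contradiction: suppose $G$ is a facet of $P$ with $G\cap F_+=S_1\times\{1\}$ and $G\cap F_-=(-S_2)\times\{-1\}$ for facets $S_1,S_2$ of $F$, where $S_1$ omits $a_p,b_q$ and $S_2$ omits $a_{p'},b_{q'}$. Pick a supporting functional $(n,c)\in\R^{d-1}\times\R$ of $G$ with maximum $\beta$ on $P$. Evaluating at the vertices $(v,1)$ and $(-v,-1)$ (for $v$ a vertex of $F$) shows that $\psi:=\langle n,\cdot\rangle$, as an affine function on $F$, attains its maximum $\beta-c$ exactly at the vertices of $S_1$ and its minimum $-(\beta+c)$ exactly at the vertices of $S_2$; in particular $\psi$ is non-constant on $F$ (it equals $\beta-c$ on $S_1$ but is strictly smaller at $a_p$). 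If $p=p'$, then every vertex of the set $A\setminus\{a_p\}$, nonempty since $|A|=i+1\ge2$, lies in both $S_1$ and $S_2$, so its $\psi$-value equals both the maximum and the minimum of $\psi$, forcing $\psi$ constant — a contradiction. The same argument on the $\Delta_{d-i-1}$-part rules out $q=q'$. Hence $p\neq p'$ and $q\neq q'$; but $i\le\frac{d-1}2$ gives $|B|=d-i\ge\frac{d+1}2>2$, hence $|B|\ge3$, so there is a vertex $b_l$ with $l\notin\{q,q'\}$, and $b_l$ lies in both $S_1$ and $S_2$, again making $\psi$ constant — a contradiction. So no facet of $P$ is adjacent to both lids.

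It then follows that $f_{d-1}(P)\ge2(1+(i+1)(d-i))$. For the second inequality, note that $g(i):=(i+1)(d-i)$ is concave in $i$ and increasing on $[1,\tfrac{d-1}2]$, so $g(i)\ge g(1)=2(d-1)$, whence $2(1+(i+1)(d-i))\ge2(1+2(d-1))=2(2d-1)$.

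The main obstacle I anticipate is exactly the claim that no facet of $P$ touches both $F_+$ and $F_-$, compounded by the need to make it robust: the combinatorial type of $\tprism F$ — indeed that of $F$ itself — does not determine the polytope up to affine isomorphism, so one cannot simply normalize coordinates. The argument sketched above is designed to be realization-independent: it uses only the combinatorial description of the facets of $\Delta_i\oplus\Delta_{d-i-1}$ in terms of deleted vertices, together with the elementary fact that a non-constant linear functional on a polytope attains its extrema precisely on proper faces.
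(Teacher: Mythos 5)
Your proof is correct, and it takes a genuinely different route from the paper's for the key step. Both proofs reduce the bound to the claim that no facet of $P$ shares a ridge with both lids $F_+$ and $F_- = -F_+$ (together with the easy injectivity of $S\mapsto\widehat S$). The paper settles this claim by a vertex count: each ridge of $F_+$ is a $(d-2)$-simplex with $d-1$ vertices (because the facets of $\Delta_i\oplus\Delta_{d-i-1}$ are joins of simplices of complementary dimensions), so a facet $G$ meeting both lids in ridges would contain two vertex-disjoint $(d-2)$-simplices, hence $f_0(G)\ge 2d-2$; since $G$ and $-G$ are vertex-disjoint in a cs polytope, $2d+2=f_0(P)\ge f_0(G)+f_0(-G)\ge 4d-4$, which fails for $d\ge4$. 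Your argument instead examines the supporting functional of $G$ restricted to the base $F$: it attains its maximum exactly on $S_1$ and its minimum exactly on $S_2$, and since $S_1\cap S_2\neq\emptyset$ the functional would be constant on $F$, contradicting that $S_1$ is a proper face of $F$. Your casework on $(p,p',q,q')$ can be compressed: $S_1\cap S_2$ omits at most the four vertices $a_p,a_{p'},b_q,b_{q'}$ out of $d+1\ge 5$, so it is automatically nonempty. The paper's vertex count is shorter and arguably slicker; your functional argument is more hands-on and makes visible exactly which feature of $F$ (its facets omit only two of $d+1$ vertices) drives the conclusion. Both are realization-independent, relying only on the combinatorics of $\partial F$ and the central symmetry.
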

\begin{proof}
    The facet $F$ in $P$ has $(i+1)(d-i)$ ridges and thus $F$ and its
    neighbors account for
    $1 + (i + 1)(d-i)$ facets. The result now follows by considering $-F$ as
    soon as we have checked that no facet $G$ shares a ridge with $F$ and with
    $-F$.  This, however, is impossible, since $G$ would have to have two
    vertex disjoint $(d-2)$-simplices as maximal faces and, therefore, at
    least $f_0(G) \ge 2d-2$ vertices. Thus $2d+2 = f_0(P) \ge f_0(G) + f_0(-G)
    \ge 4d-4$.
\end{proof}

\begin{cor} If $P = \tprism Q$ with $Q \cong \bip \Delta_2$, then $P$ is
    large.
\end{cor}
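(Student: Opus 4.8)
The plan is to derive this corollary directly from the lemma above. The first step is a purely combinatorial identification: the bipyramid over a triangle is the free sum $\bip \Delta_2 = \Delta_1 \oplus \Delta_2$ (recall the convention $\bip H = I \oplus H$ used earlier for Hanner polytopes). Thus with $d = 4$ the base $Q \cong \Delta_1 \oplus \Delta_2$ is of the form $\Delta_i \oplus \Delta_{d-i-1}$ with $i = 1$, and $i = 1$ is admissible since it is the only integer in the range $1 \le i \le \tfrac{d-1}{2} = \tfrac32$. Hence $P = \tprism Q$ satisfies the hypotheses of the lemma, which gives
\[
    f_3(P)\ \ \ge\ \ 2\bigl(1 + (i+1)(d-i)\bigr)\ \ =\ \ 2(1 + 2 \cdot 3)\ \ =\ \ 14 .
\]

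The second step is to pin down $f_0(P)$. The base $Q = \bip \Delta_2$ has exactly $5$ vertices, and in $P = \tprism Q = \conv\bigl(Q \times \{1\} \cup -Q \times \{-1\}\bigr)$ the two sets $Q \times \{1\}$ and $-Q \times \{-1\}$ are facets, cut out by the supporting hyperplanes $x_4 = 1$ and $x_4 = -1$. Every vertex of $P$ therefore lies on one of these two facets, so $f_0(P) = 2 f_0(Q) = 10$.

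Putting the two steps together, $f_0(P) + f_3(P) \ge 10 + 14 = 24$, which is precisely the inequality~(\ref{eqn:24}) defining \emph{large} polytopes, so $P$ is large. I do not anticipate any real obstacle here; the proof is essentially a specialization of the lemma. The only things to watch are the combinatorial fact $\bip \Delta_2 \cong \Delta_1 \oplus \Delta_2$ and the arithmetic, which — perhaps amusingly — lands exactly on the threshold $24$, so there is no slack to spare.
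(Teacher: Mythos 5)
Your proof is correct and takes exactly the approach the paper intends: identify $\bip\Delta_2 \cong \Delta_1 \oplus \Delta_2$, apply the preceding lemma with $d=4$, $i=1$ to get $f_3(P) \ge 14$, note $f_0(P) = 2f_0(Q) = 10$ since $Q\times\{1\}$ and $-Q\times\{-1\}$ are facets carrying all the vertices, and sum to hit the threshold $24$. The paper leaves the corollary without an explicit proof, but this is clearly the argument it relies on.
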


\section{Conjecture C in dimension 4}
\label{sec:C_dim4}

We will refute conjecture \C\ \emph{strongly} for dimension $4$: We exhibit a
flag-functional $\a \in \FF_4$ and a cs $4$-polytope $P$ such that $\a(P) <
\a(H)$ for every $4$-dimensional Hanner polytope $H$. 

Geometrically, this means that there is an oriented hyperplane in the vector
space $\R^{2^{[d]}}$ that has the flag vector $(f_S(P))_S$ on its negative
side, but all the flag-vectors of Hanner polytopes on its positive side, while
some parallel hyperplane has the flag-vectors of \emph{all} (not-necessarily
cs) $4$-polytopes on its positive side.

For this, consider the two functionals
\begin{eqnarray*}
        \ell_1(P) &=& f_{02}(P) - 3 f_2(P) \\
        \ell_2(P) &=& f_{13}(P) - 3 f_1(P) \\
                  &=& f_{02}(P) - 3 f_1(P). 
\end{eqnarray*}
Let $F_k(P)$ be the number of $2$-faces with exactly $k$ vertices. Then
$f_{02}(P) = \sum_{k\ge 3} k\cdot F_k(P)$. Thus $\ell_1(P) = \sum_{k\ge 4}
(k-3) \cdot F_k(P)$, which is clearly non-negative for every $4$-polytope. In
case of equality the polytope is \emph{$2$-simplicial}. For the second
functional note that $\ell_2(P) = \ell_1(P^\dual) \ge 0$ and the bound is
attained by  the \emph{$2$-simple} polytopes. Thus, the functional 
\[\a(P)\ \  :=\ \  \frac{1}{2}(\ell_1 + \ell_2)\ \ =\ \ f_{02} -
\frac{3}{2}(f_1 + f_2)
\]
is non-negative for all $4$-polytopes; it vanishes exactly for $2$-simple
$2$-simplicial polytopes. (See~\cite{paf04} for examples of such polytopes.)

Consider the cs $4$-polytope
\[
    P_4\ \ :=\ \ [-1,+1]^4 \cap \{ \x \in \R^4 : -2 \le x_1 + \cdots + x_4 \le 2 \}
\]
which arises from the $4$-cube $C_4$ by \emph{chopping off} the vertices
$\pm\1$ by hyperplanes that pass through the respective neighbors.  It is
straightforward to verify that the $f$-vector of $P_4$ is
\[
    f(P_4) = (10,32,36,14).
\]
Indeed, the only faces that go missing are the $2\cdot4$ edges incident to the
two vertices; the added faces are the faces of strictly positive dimension of
the vertex figures at $\1$ and $-\1$. Concerning the number of
vertex--$2$-face incidences: there are only triangles and quadrilaterals. The
number of triangles is twice the number of $2$-faces and facets incident to
any given vertex. Thus, $f_{02} = 3 \cdot 20 + 4 \cdot 12 = 108$ and $\a(P_4) =
6$.

Theorem \ref{thm:Cdim4} now follows from inspecting the
following table, which lists in its first row the data for $P_4$, and
then (extended) data for the $4$-dimensional Hanner polytopes:
\[
    \begin{array}{l|@{\quad(} r @{,\,} r @{,\,} r @{\,} r @{\,)\quad} r @{\quad} r}
                         & f_0 & f_1 & f_2 & f_3 & f_{02} & \a\\
        \hline
        P_4                & 10  & 32  & 36  & 14  & 108& 6\\[2mm]
        C_4              & 16  & 32  & 24  & 8   & 96 & 12 \\
        C^\dual_4        &  8  & 24  & 32  & 16  & 96 & 12 \\
        \bip C_3         & 10  & 28  & 30  & 12  & 96 & 9 \\
        \prism C^\dual_3 & 12  & 30  & 28  & 10  & 96 & 9 \\
    \end{array}
\]

\section{The central hypersimplices $\HS_k=\Delta(k,2k)$}
\label{sec:hypsimp}

For natural numbers $d > k > 0$, the \emph{$(k,d)$-hypersimplex} 
is the $(d-1)$-dimensional polytope
\[
    \Delta(k,d) = \conv \left\{ \x \in \ZO^d :  x_1 + x_2 + \cdots + x_d = k
    \right\} \subset \R^d.
\]
Hypersimplices were considered as (regular) polytopes in \cite[\S 11.8]{cox73}
(see also \cite[Sect.~3.3.2]{paf04} and \cite[Exercise 4.8.16]{grue03}),  as
well as in connection with algebraic geometry in \cite{gkz94}, \cite{gel82},
and \cite{stu96}. 

One rather simple observation is that $\Delta(k,d)$ and $\Delta(d-k,d)$ are
affinely isomorphic under the map $\x \mapsto \1 - \x$.  In particular, the
hypersimplex $\HS_k := \Delta(k,2k)$ is a centrally symmetric
$(2k-1)$-polytope with $f_0(\HS_k) = \binom{2k}{k}$ vertices.

In a different, full-dimensional realization, the central hypersimplex
is given by
\[
    \HS_k\ \ \cong\ \ \conv \left\{ \x \in \PM^{2k-1} :
     -1\le x_1 + x_2 + \cdots + x_{2k-1} \le  1 \right\}.
\]
From this realization it is easy to see that for $k \ge 2$ the hypersimplex
$\HS_k$ is a twisted prism over $\Delta(k,2k-1)$ with $f_{2k-2}(\HS_k) = 4k =
2(2k-1) + 2$ facets: Since the above realization lives in an odd-dimensional
space, the sum of the coordinates for any vertex is either $+1$ or $-1$. The
points satisfying $\sum_ix_i = 1$ form a face that is affinely isomorphic to
$\Delta(k,2k-1)$. To verify the number of facets, observe that $\HS_k$ is the
intersection of the $2k$-cube with a hyperplane that cuts all its $4k$ facets.

We will show that in  odd dimensions $d = 2k-1 \ge 5$ a $d$-dimensional Hanner
polytope that has no more facets than $\HS_k$ has way too many vertices for
conjecture \B. In even dimensions $d \ge 6$ Theorem \ref{thm:dim5} follows
then by taking a prism over $\HS_k$. The following proposition gathers the
information needed  about Hanner polytopes.
\begin{prop}\label{prop:Hanner2d}
    Let $H$ be a $d$-dimensional Hanner polytope. Then 
    \begin{compactenum}[\rm(a)]
        \item $f_{d-1}(H) \ge 2d$.
        \item If $f_{d-1}(H) = 2d$, then $H$ is a $d$-cube.
        \item If $f_{d-1}(H) = 2d+2$, then $H = C_{d-3} \times C_3^\dual$.
    \end{compactenum}
\end{prop}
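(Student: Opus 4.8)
The plan is to induct on the dimension $d$, exploiting the recursive structure of Hanner polytopes: every Hanner polytope $H$ of dimension $d\ge 2$ is either a direct product $H'\times H''$ or a direct sum $H'\oplus H''$ of lower-dimensional Hanner polytopes $H',H''$ of dimensions $d'$ and $d''=d-d'$. First I would record the elementary facet counts under these two operations. For a product, $f_{d-1}(H'\times H'') = f_{d'-1}(H') + f_{d''-1}(H'')$; for a sum, $f_{d-1}(H'\oplus H'') = f_{d'-1}(H')\cdot f_{d''-1}(H'')$ (the facets of a sum are the ``joins'' of a facet of one factor with all of the other). Part (a) is then immediate by induction, with base case $d=1$ ($f_0 = 2 = 2d$): in the product case $f_{d-1}(H)\ge 2d'+2d''=2d$, and in the sum case $f_{d-1}(H)\ge (2d')(2d'')\ge 2(d'+d'')=2d$ since $d',d''\ge 1$ forces $2d'd''\ge d'+d''$.

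For part (b), I would chase the equality cases through the same induction. If $f_{d-1}(H)=2d$ and $H=H'\oplus H''$, then $(2\le)\,f_{d'-1}(H')f_{d''-1}(H'')=2d$ combined with $f_{d'-1}(H')\ge 2d'$, $f_{d''-1}(H'')\ge 2d''$ forces $4d'd''\le 2d'+2d''$, impossible for $d',d''\ge1$ unless \dots\ in fact it is always impossible, so the equality case never arises from a sum (for $d\ge2$). Hence $H=H'\times H''$ with $f_{d'-1}(H')=2d'$ and $f_{d''-1}(H'')=2d''$; by the inductive hypothesis each factor is a cube, and a product of cubes is a cube, so $H=C_d$. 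The base case $d=1$ gives the segment $C_1$.

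For part (c), suppose $f_{d-1}(H)=2d+2$. In the sum case $H=H'\oplus H''$ we need $f_{d'-1}(H')f_{d''-1}(H'')=2d+2$ with both factors at least $2d'$ resp.\ $2d''$ and at least $2$; a short case analysis shows the only possibility is $\{d',d''\}=\{1,\,?\}$ is still too large, so in fact no sum decomposition attains $2d+2$ for $d$ large — I would check the small cases by hand and otherwise rule it out via $2d'd''\le d+1$. Thus $H=H'\times H''$ and $f_{d'-1}(H')+f_{d''-1}(H'')=2d+2=2d'+2d''+2$, so one factor, say $H'$, has $f_{d'-1}(H')=2d'+2$ and the other has $f_{d''-1}(H'')=2d''$; by (b), $H''=C_{d''}$ is a cube, and by the inductive hypothesis $H'=C_{d'-3}\times C_3^\dual$. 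Then $H = C_{d'-3}\times C_3^\dual \times C_{d''} = C_{d-3}\times C_3^\dual$. It remains to verify the base case $d=4$ directly: the only $4$-dimensional Hanner polytope with exactly $10$ facets is $C_1\times C_3^\dual=C_3^\dual\times I$, which can be read off the table of $4$-dimensional Hanner polytopes already displayed in the paper.

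The main obstacle I anticipate is the bookkeeping in parts (b) and (c): I must be careful that ``$H'$ is a cube'' is used correctly (a product of cubes, including the segment $C_1$, is again a cube) and that in the sum case the inequality $f_{d'-1}(H')f_{d''-1}(H'')\ge\max(4d'd'',\,2\cdot 2)$ is strong enough to exclude sums at the relevant thresholds $2d$ and $2d+2$ — the case $d'=d''=1$ (where $H'\oplus H''$ is the square, a self-dual Hanner polytope) needs separate attention since there $4d'd''=4=2d$, but the square is also $C_2$, so consistency is maintained. Once these low-dimensional edge cases are pinned down, the induction runs smoothly.
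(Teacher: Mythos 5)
Your proposal follows essentially the same route as the paper: induct on $d$ via the Hanner recursion, use the facet formulas $f_{d-1}(H'\times H'')=f_{d'-1}(H')+f_{d''-1}(H'')$ and $f_{d-1}(H'\oplus H'')=f_{d'-1}(H')\cdot f_{d''-1}(H'')$, show that the sum case overshoots both thresholds, and then chase the equality cases (b), (c) through the product decomposition. The paper simply declares $d\le 3$ as the base case, which makes the exclusion of the sum clean (for $d\ge 4$ one has $4i(d-i)\ge 4(d-1)\ge 2d+4$) and sidesteps the $d'=d''=1$ edge case you had to treat by hand; adopting that base case would remove the only wrinkle in your write-up.
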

\begin{proof}
    Since all three claims are certainly true for Hanner polytopes of
    dimension $d \le 3$, let us assume that $d \ge 4$.  By definition, $H$ is
    the direct sum or product of two Hanner polytopes $\Hp$ and $\Hpp$ of
    dimensions $i$ and $d-i$ with $1 \le i \le \tfrac{d}{2}$. 

    If $H = \Hp \oplus \Hpp$, then, by induction on $d$, we get
    \[
        f_{d-1}(H) = f_{i-1}(\Hp) \cdot f_{d-i-1}(\Hpp) \ge 4i(d-i) \ge 2d+4.
    \]
    Therefore, we can assume that $H = \Hp \times \Hpp$ and $f_{d-1}(H) =
    f_{i-1}(\Hp) + f_{d-i-1}(\Hpp) \ge 2d$ which proves (a). The condition in
    (b) is satisfied if and only if it is satisfied for each of the two
    factors. Therefore, by induction, both factors are cubes and so is their
    product.

    Similarly, the condition in (c) is satisfied iff it is satified for one of
    the two factors. By using (a) we see that the remaining factor is a cube,
    which proves (c).
\end{proof}

\begin{proof}[Proof of Theorem \ref{thm:dim5}]
Let $d = 2k - 1 \ge 5$ and let $H$ be a $d$-dimensional Hanner polytope with
$f_i(H) \le f_i(\HS_k)$ for all $i = 0, \dots, d-1$. Since the hypersimplex
$\HS_k$ has $2d+2$ facets, it follows from Proposition \ref{prop:Hanner2d}
that $H$ is either $C_{2k-1}$ or $C_{2k-4} \times C_3^\dual$. In either
case, the Hanner polytope satisfies $f_0(H) \ge 3 \cdot 2^{2k-3} >
\tbinom{2k}{k}$ , where the last inequality holds for $k \ge 3$.

For even dimensions $d = 2k$ consider $\prism \HS_k = I \times \HS_k$, which
has $ 2(2k-1) + 4 = 2d+2$ facets. Again by Proposition \ref{prop:Hanner2d}, a
Hanner polytope $H$ with componentwise smaller $f$-vector is of the form $I
\times H^\prime$ and the result follows from the odd case.
\end{proof}

\section{Two more examples}
\label{sec:examples}

We wish to discuss two examples of centrally symmetric polytopes that exhibit
some remarkable properties, two of which are being \emph{self-dual} and being
counter-examples to conjecture \C. Both polytopes are instances of
\emph{Hansen polytopes} \cite{han77}, for which we sketch the construction.

Let $G = (V,E)$ be a \emph{perfect} graph on the vertex set $V = \{1, \dots,
d-1\}$, that is, a simple, undirected graph without induced odd cycles of
length $\ge 5$ (cf.\ Schrijver \cite[Chap.~65]{Schrijver:VolB}).  Let $\ic{G}
\subseteq 2^V$ be the \emph{independence complex} of $G$. So $\ic{G}$ is the
simplicial complex on the vertices $V$ defined by the relation that $S
\subseteq V$ is contained in $\ic{G}$ if and only if the vertex induced
subgraph $G[S]$ has no edges. To every independent set $S \in \ic{G}$
associate the (characteristic) vector $\tchi_S \in \PM^{d-1}$ with
$(\tchi_S)_i = +1$ if and only if $i \in S$. The collection of vectors is a
subset of the vertex set of the $(d-1)$-cube. Let $P_{\ic{G}} = \conv \{
\tchi_S : S \in \ic{G} \} \subset [-1,+1]^{d-1}$ be the vertex induced
subpolytope. The \emph{Hansen polytope} $H(G)$ associated to $G$ is the
twisted prism over $P_{\ic{G}}$. In particular, $H(G)$ is a centrally
symmetric $d$-polytope with $f_0(H(G))  = 2\,|\ic{G}|$ vertices.  A graph $G =
(V,E)$ is \emph{self-complementary} if $G$ is isomorphic to its complementary
graph $\overline{G} = (V, \tbinom{V}{2}{\setminus}E)$.

\begin{prop} 
    If $G = (V,E)$ is a self-complementary, perfect graph on $d-1$ vertices,
    then $H(G)$ is a centrally symmetric, self-dual $d$-polytope.
\end{prop}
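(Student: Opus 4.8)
The plan is to show two things: that $H(G)$ is centrally symmetric (which is immediate, since every twisted prism is cs by definition), and that $H(G)$ is combinatorially self-dual, i.e.\ its face lattice is anti-isomorphic to itself. The key structural input is a description of the faces of a twisted prism $\tprism Q = \conv(Q\times\{1\}\cup -Q\times\{-1\})$ in terms of the faces of $Q$, together with the classical fact (going back to Hansen) that for a perfect graph $G$, the polytope $P_{\ic G}$ and its polar are closely related via the complementary graph $\overline G$. Since $G$ is self-complementary, $\overline G\cong G$, and this symmetry should be exactly what promotes ``$P_{\ic G}$ is polar-dual to something built from $\overline G$'' into ``$H(G)$ is self-dual.''

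First I would recall/establish the face structure of a Hansen polytope. The relevant fact is that for a perfect graph $G$ on $V=\{1,\dots,d-1\}$, the polytope $P_{\ic G}=\conv\{\tchi_S:S\in\ic G\}\subseteq[-1,+1]^{d-1}$ has a polar body whose vertices correspond to $\tchi_T$ for $T$ ranging over the independent sets of $\overline G$ (equivalently, the cliques of $G$); this is the ``theta body = clique polytope'' phenomenon for perfect graphs, i.e.\ $P_{\ic G}$ and $P_{\ic{\overline G}}$ are polar to each other after the natural identification of $\R^{d-1}$ with its dual by the standard inner product. Granting this, the second ingredient is a lattice-theoretic description of $\tprism Q$: its facets are $Q\times\{1\}$, $-Q\times\{-1\}$, and the ``twisted side facets'' $\tprism G$ for $G$ a facet of $Q$; more generally the proper faces of $\tprism Q$ are of two kinds — faces contained in $\pm Q$, and twisted prisms over faces of $Q$ of dimension $\le d-2$. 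One then checks that the face lattice of $\tprism Q$ is anti-isomorphic to the face lattice of $\tprism Q^{\dual}$, where $Q^\dual$ denotes the polar of $Q$; this is a general statement about twisted prisms that I would isolate as the crux. Combining: $H(G)=\tprism P_{\ic G}$ has face lattice anti-isomorphic to that of $\tprism(P_{\ic G})^\dual=\tprism P_{\ic{\overline G}}=H(\overline G)$. Since $G\cong\overline G$, we get $H(G)\cong H(G)^\dual$ combinatorially, and since $H(G)$ is a twisted prism it is cs; a cs polytope that is combinatorially self-dual can be realized so that the duality is also a (linear) polarity, giving the statement.

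I expect the main obstacle to be the twisted-prism duality lemma: proving cleanly that $\tprism Q$ and $\tprism Q^\dual$ have anti-isomorphic face lattices. The intuitive reason is that $\tprism Q$ should be thought of as a ``twisted'' analogue of the free sum/product, and passing to the polar swaps the two base facets with the two apex-like structures while transposing the side faces; but making this precise requires either an explicit inequality description of $\tprism Q$ from one of $Q$ (each facet inequality $\langle a,x\rangle\le 1$ of $Q$ giving a facet of $\tprism Q$, plus the two ``base'' inequalities $\pm x_d\le 1$) and then reading off the vertex description of the polar, or a direct bijection on faces with verification that it reverses inclusion. I would carry this out via the inequality description, since the vertices of $\tprism Q$ are exactly $\{(v,1):v\in\V Q\}\cup\{(-v,-1):v\in\V Q\}$ and the facets come from facets of $Q$ in the twisted fashion plus the two bases, so the self-duality of $H(G)$ reduces to the polarity $P_{\ic G}\leftrightarrow P_{\ic{\overline G}}$ together with $G\cong\overline G$. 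The remaining step — upgrading combinatorial self-duality of a cs polytope to an actual geometric self-duality — is routine given the explicit coordinates, since the polar of a Hansen polytope can be computed directly and matched with $H(\overline G)$.
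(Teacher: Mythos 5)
Your plan is structurally different from the paper's, which simply invokes Hansen's Theorem~4 from \cite{han77} stating that $H(G)^{\dual}\cong H(\overline G)$ for any perfect graph $G$; self-complementarity then immediately gives $H(G)^{\dual}\cong H(G)$. The issue is that the two intermediate claims you want to isolate as lemmas are both false, so the factorization you propose cannot be made to work.

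First, $P_{\ic G}$ and $P_{\ic{\overline G}}$ are not polar to each other under the standard inner product. In fact $P_{\ic G}$ need not even contain the origin in its interior: take $G$ to be the edge $\{1,2\}$ on two vertices. Then $\ic{G}=\{\emptyset,\{1\},\{2\}\}$, so $P_{\ic G}=\conv\{(-1,-1),(1,-1),(-1,1)\}$, whose facet $x_1+x_2\le 0$ passes through the origin, so the polar is unbounded; meanwhile $P_{\ic{\overline G}}=[-1,1]^2$. The correct duality for perfect graphs is of \emph{anti-blocking} type on the $0/1$ stable-set polytopes, not a polar duality of the recentered $\pm1$-polytopes, and converting the former into a genuine polarity is precisely the job that the twisted prism does in Hansen's theorem.

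Second, the ``crux'' lemma you propose --- that $\tprism Q$ and $\tprism Q^{\dual}$ have anti-isomorphic face lattices --- is also false. Take $Q=[-1,1]^2$. Then $\tprism Q$ is the $3$-cube, whose dual is the octahedron with $6$ vertices; but $Q^{\dual}=C_2^{\dual}$ is the diamond, and $\tprism Q^{\dual}$ is a square antiprism with $8$ vertices and $10$ facets, which is not combinatorially dual to the cube. Likewise, taking $Q$ to be a centered triangle gives $\tprism Q\cong\tprism Q^{\dual}\cong$ octahedron, which is not self-dual. So ``twisted prism commutes with polarity'' fails in general; the duality $H(G)^{\dual}\cong H(\overline G)$ is a special feature of the Hansen construction over stable-set polytopes of perfect graphs and does not decompose into a duality of the bases followed by a formal property of $\tprism$. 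To repair the argument you would need to prove Hansen's Theorem~4 directly (e.g.\ by writing an explicit inequality description of $H(G)$ in terms of the cliques of $G$, using perfection, and matching it against the vertex description of $H(\overline G)$), rather than factoring it as you propose.
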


\begin{proof}
    By \cite[Thm.~4]{han77}, the polytope $H(G)^\dual$ is isomorphic to
    $H(\overline{G}) = H(G)$.
\end{proof}

\begin{example}\label{G_4}
Let $G_4$ the path on four vertices $v_1,v_2,v_3,v_4$.  This is a
self-complementary perfect graph, so $H(G_4)$ is a $5$-dimensional self-dual
cs polytope.  We compute its $f$-vector, and compare it to the $f$-vectors of
the $5$-dimensional hypersimplex $\HS_3$ and of the eight $5$-dimensional
Hanner polytopes. This results in the following table (the four Hanner
polytopes not listed are the duals of the ones given here, with the
corresponding reversed $f$-vectors):
\[
  \begin{array}{l@{\qquad}|@{\quad(\,} r @{,\,} r @{,\,} r @{,\,} r @{,\,}  r @{\,)\quad} c@{\quad} c}
                      & f_0 & f_1 & f_2 & f_3 & f_4 & f_0+f_4 & s \\
  \hline
  H(G_4) \hspace{8mm} & 16  & 64  & 98  & 64  & 16  & 32 & 259 \\[2mm]
  \HS_3 \hspace{12mm} & 20  & 90  & 120 & 60  & 12  & 32 & 303 \\[2mm]
  C_5^\dual            & 10  & 40  & 80  & 80  & 32  & 42 & 243 \\
  \bip\bip C_3         & 12  & 48  & 86  & 72  & 24  & 36 & 243 \\
  \bip\prism C_3^\dual & 14  & 54  & 88  & 66  & 20  & 34 & 243 \\
  \prism C^\dual_4     & 16  & 56  & 88  & 64  & 18  & 34 & 243
\end{array}
\]
Thus $H(G_4)$ refutes conjecture~\B\ in dimension~$5$ \emph{strongly}: its
value for $f_0+f_4$ is smaller than for any Hanner polytope.  Furthermore,
$H(G_4)$ has a smaller face number sum $s$ than the hypersimplex, so in
that sense it is even a better example to look at in view of conjecture~\A.
\end{example}

\begin{example}
  Let $G_5$ be the path on five vertices $v_1,v_2,v_3,v_4,v_5$ (in this
  order), with an additional edge connecting the second vertex $v_2$ to the
  fourth vertex $v_4$ on the path.  This is a self-complementary perfect
  graph, so we obtain a $6$-dimensional self-dual cs polytope $H(G_5)$.  Again
  its $f$-vector can be computed and compared to those of the prism over the
  $5$-dimensional hypersimplex, $I \times \HS_3$, which we had used for
  Theorem~\ref{thm:dim5} as well as the eighteen Hanner polytopes in dimension
  $6$ (again we do not list the duals explicitly):

\[
  \begin{array}{l@{\qquad}|@{\quad(\,} r @{,\,} r @{,\,} r @{,\,} r @{,\,} r @{,\,}  r  @{\,)\quad} c @{\quad} c}
                            & f_0 & f_1 & f_2 & f_3 & f_4 & f_5 & f_0+f_5 & s \\
  \hline
  H(G_5) \hspace{8mm}       & 24 &  116 &  232 &  232 &  116 &  24 & 48 & 745 \\[2mm]
  \prism\HS_3 \hspace{12mm} & 40  & 200 & 330 & 240 & 84  & 14  & 54 & 908 \\[2mm]
  C_6^\dual                 & 12  & 60  & 160 & 240 & 192 & 64  & 76  & 729 \\
  \bip\bip\bip C_3          & 14  & 72  & 182 & 244 & 168 & 48  & 62  & 729 \\
  \bip\bip\prism C_3^\dual  & 16  & 82  & 196 & 242 & 152 & 40  & 56  & 729 \\
  \bip\prism C_4^\dual      & 18  & 88  & 200 & 240 & 146 & 36  & 54  & 729 \\
  \bip\bip C_4              & 20  & 100 & 216 & 232 & 128 & 32  & 52  & 729 \\
  \prism C_5^\dual          & 20  & 90  & 200 & 240 & 144 & 34  & 54  & 729 \\
  \bip\prism\bip C_3        & 22  & 106 & 220 & 230 & 122 & 28  & 50  & 729 \\
  \prism\bip\bip C_3        & 24  & 108 & 220 & 230 & 120 & 26  & 50  & 729 \\
  C_3 \oplus C_3            & 16  & 88  & 204 & 240 & 144 & 36  & 52  & 729 
\end{array}
\]
  Thus $H(G_5)$ is a self-dual cs polytope that also refutes conjecture~\B\ in
  dimension~$6$ \emph{strongly}.  Moreover, also looking at the pair
  $(f_1,f_4)$ suffices to derive a contradiction to conjecture~\B. In these
  respects, $H(G_5)$ is the nicest and strongest counter-example that we
  currently have for conjecture~\B\ in dimension~$6$.
\end{example}

Note that there are no self-complementary (perfect) graphs on $6$ or on $7$
vertices, since $\binom62=15$ and  $\binom72=21$ are~odd.  Thus, we cannot
derive self-dual polytopes in dimensions $7$ or $8$ from Hansen's
construction.

The Hansen polytopes, derived from perfect graphs, are subject to further
research. For example, $H(G_4)$ and $H(G_5)$ are interesting examples in view
of the Mahler conjecture, since they exhibit only a small deviation from the
Mahler volume of the $d$-cube, which is conjectured to be minimal (see
Kuperberg \cite{Kuperberg:Mahler2} and Tao~\cite{tao:_open}).

The Hansen polytopes in turn are special cases of \emph{weak Hanner
polytopes}, as defined by Hansen \cite{han77}, which are twisted prisms over
any of their facets.  Greg Kuperberg has observed that all of these are
equivalent to $\pm1$-polytopes.

\bibliographystyle{siam}
\begin{small}
    \bibliography{3dconj}
\end{small}

\end{document}